\newtheorem{theorem}{Theorem}%[section]
\newtheorem{lemma}[theorem]{Lemma}%[section]
\newtheorem{remark}[theorem]{Remark}%[section]
\newcommand*{\B}[1]{\mathbf{#1}}
\newcommand*{\CR}{\mbox{\scriptsize CR}}
\newcommand*{\bu}{\mathbf{u}}
\newcommand*{\bv}{\mathbf{v}}
\newcommand*{\bw}{\mathbf{w}}
\newcommand*{\bp}{\mathbf{p}}
\newcommand*{\bff}{\mathbf{f}}
\newcommand*{\bX}{\mathbf{X}}
\newcommand*{\bV}{\mathbf{V}}
\newcommand*{\bU}{\mathbf{U}}
\newcommand*{\bRT}{\mathbf{RT}}
\newcommand*{\Div}{\mbox{div}\:}
\newcommand*{\Divv}{\mbox{div}~}
  \title{Computer-assisted proof for the stationary solution existence of the Navier--Stokes equation over 3D domains
  \thanks{The first author is supported by Japan Society for the Promotion of Science, Grant-in-Aid for Scientific Research (B) 20H01820, 21H00998, and Grant-in-Aid for Scientific Research (C) 18K03411.
The second author is supported by Grant-in-Aid for Scientific Research (C) 18K03434, 21K03378. The last author is supported by JST CREST Grant Number JPMJCR14D4, Japan.}
}
  \author[1]{Xuefeng LIU}
  \affil[1]{Niigata University\thanks{xfliu@math.sc.niigata-u.ac.jp}}
  \author[2]{Mitsuhiro T. NAKAO}
  \affil[2,3]{Waseda University}
  \author[3]{Shin'ichi OISHI}
\begin{document}

 \maketitle
  %~ \thanks{This work was supported by KAKENHI (18K03411,16Ha03950).}
 %%%% Abstract

\begin{abstract}
This paper proposes a computer-assisted solution existence verification method for the stationary Navier--Stokes equation over general 3D domains. The proposed method verifies that the exact solution as the fixed point of the Newton iteration exists around the approximate solution through rigorous computation and error estimation.
The explicit values of quantities required by applying the fixed point theorem are obtained by utilizing newly developed quantitative error estimation for finite element solutions to boundary value problems and eigenvalue problems of the Stokes equation.

\end{abstract}

%   \subjclass[2000]{MSC-35Q30, MSC-65M60}
%   \keywords{Navier--Stokes equation, verified computing, finite element method, computer-assisted proof}

  %%%% Body
\section{Introduction}
As a new approach to investigating the solution existence of nonlinear equation systems, verified computing has attracted the attention of researchers in the fields of pure mathematics and scientific computing. In the past decades, there have been several fundamental results with regard to milestones of solution verification for nonlinear equations (refer to the early work of  M. Nakao, M. Plum, and S. Oishi \cite{nakao1988numerical, Plum1991,Oishi1995} and the newly published book \cite{nakao2019numerical_book}).

The fixed-point theorem is a fundamental tool in solution verification, and several variations of the theorem being used to solve practical problems exist. For example, the Newton--Kantorovich theorem was applied to the solution verification for a semi-linear elliptic partial differential equation in \cite{Takayasu2013}, and it is also utilized in this study. A similar approach was proposed by Plum \cite{Plum2009}, which relaxes the condition on the continuity of the Fr\'echet derivative of a functional. An early approach was provided by Nakao \cite{nakao1988numerical,NAKAO1992} to subdivide the function space into a finite-dimensional part and an orthogonal part to form the fixed-point formulation.

\medskip

In this paper, we examine the solution verification for the Navier--Stokes equation. 
Owing to the existence of a nonlinear convection term in the equation,  it is challenging work to study the solution of the Navier--Stokes equation.
In 1999, Watanabe--Yamamoto--Nakao \cite{Watanabe_etal1999} considered the Navier--Stokes equation over a two-dimensional (2D) square domain and provided a successful solution verification case. The approach utilized the {\em a priori} error proposed by Nakao, Yamamoto, and Watanabe \cite{Nakao_etal1998} to solve residue error of the non-diverging part of the finite element method (FEM) approximation to the exact solution. 
The {\em a priori} error estimation of \cite{Nakao_etal1998} is based on the constant appearing in the Korn inequality. Since the explicit value of the constant is not available for 3D domains, it is difficult to follow the approach of \cite{Nakao_etal1998} to solve problems on 3D domains.

Other researchers have examined solution verification opportunities for the Navier--Stokes equation.
In \cite{kobayashi2013computer}, K. Kobayashi reported a proof for the global uniqueness of Stokes' wave of the extreme form. 
In 2020, at the seminar of the Centre de Recherches Mathematiques Computer-assisted Mathematical Proofs (CRM CAMP) in Nonlinear Analysis, J. Wunderlich reported a computer-assisted existence proof for Navier--Stokes equations on an unbounded strip with an obstacle. Such an approach is based on the homotopy eigenvalue estimation method developed by M. Plum that works well for unbounded domains (refer to \cite{pacella2017computer}).
In \cite{van2021spontaneous}, J.B. van den Berg, M. Breden, JP. Lessard et al. reported a constructive proof of the existence of periodic 
orbits in the forced autonomous Navier--Stokes equations on a three-torus, and the solution verification succeeded for a reduced 2-dimensional space under the symmetry condition of the solution.

\medskip

In this paper, we follow the frame of Newton--Kantorovich's theorem and utilize the newly developed {\em a priori} error estimation for strictly divergence-free FEM approximation \cite{Liu-2021} to consider the solution verification for the stationary Navier--Stokes equation over general 3D domains.
Regarding the kernel problems of applying Newton--Kantorovich's theorem,  the following schemes are used:
\begin{itemize}
 \item [1)]    
 We apply the Scott-Vogelius finite element space to obtain a divergence-free approximate solution to the Navier--Stokes equation.
 To provide the {\em a priori} error estimation of the projection that maps the solution-existing space to the Scott-Vogelius space, 
 the hypercircle-based {\em a priori} error estimation method developed by Liu \cite{Liu-2021} is adopted. 
 This method is a generalization of one proposed by Liu--Oishi for Poisson's equation \cite{Liu+Oishi2012}, which inherits the idea of Kikuchi \cite{Kikuchi+Saito2007} for the {\em a posterior } error estimation.
\item [2)] To provide rigorous eigenvalue estimation for differential operators over a 3D domain, the guaranteed eigenvalue evaluation method proposed by Liu \cite{Liu2015} is used, which can deal with domains of general shapes in a concise and uniform approach.
 \item [3)]  To bound the norm of the inverse of a differential operator, the algorithm based on the fixed-point theorem \cite{Watanabe_etal1999} is utilized.  Moreover, in this paper, we propose a new algorithm to process the divergence-free condition and obtain a direct evaluation for the quantity $\tau$, which is required by the approach of \cite{Watanabe_etal1999}. 
\end{itemize}
  
The rest of the paper is organized as follows. In \S 2, we introduce the notation for function spaces and the problem setting and introduce the Newton--Kantorovich theorem along with the kernel sub-problems to overcome.
In \S 3, the approach to sub-problems is described in a detailed way. In \S 4, the summarization of the implementation of the Newton--Kantorovich theorem in the solution verification is provided. In \S 5, a successful case of solution verification in a 3D domain with a hole inside is reported.

\section{Function spaces and the methodology for solution verification}
\label{sec:ns-eq}
%
%\subsection{Equation in operator formulation} 

In this paper, we concern the stationary Navier--Stokes equation over a 3D domain $\Omega$ of general shape:
\begin{equation}
    \label{eq:ns-eq}
-\epsilon \Delta \mathbf{u} +  (\mathbf{u}\cdot \nabla )\mathbf{u} + \nabla p= \mathbf{f}, ~ \mbox{div }\mathbf{u}=0 \mbox{ in } \Omega, \mathbf{u}=0\mbox{ on }\partial \Omega \:.
\end{equation}
Here, $\epsilon$ is the viscosity coefficient of the fluid, 
 \(\mathbf{f}:\Omega \rightarrow \mathbb{R}^3\) is an applied body
force, \(\mathbf{u} : \Omega \rightarrow \mathbb{R}^3 \) is the velocity vector and \(p:\Omega \rightarrow \mathbb{R} \) is the pressure. In addition, symbols \(\Delta\) and \(\nabla\) denote the
Laplacian and the gradient operators, respectively.
For a vector field $\bu=(u_1,u_2,u_3):\Omega \to \mathbb{R}^3$, its divergence is denoted by $\Divv \bu:=\partial_x u_1 +\partial_y  u_2 +\partial_z  u_3$.

\medskip

Let us describe the functions spaces to be used to study equation (\ref{eq:ns-eq}). 
Define function space $\bV$ by 
\begin{equation}
\label{def:V}
\bV=\{ \bv \in \left( H_0^1(\Omega)\right)^3| \mbox{ div } \bv = 0\}\:,
\end{equation}
along with inner product and norm
$$
(\bu,\bv):=\int_\Omega \nabla \bu \cdot \nabla \bv ~ \mbox{d}\Omega, ~~ \|\bu\|_{\bV} :=\sqrt{(\bu,\bu)} \:.
$$
The dual space of $\bV$ is denoted by $\bV^\ast$. 
The definition of Stokes quation also involves the space $L^2(\Omega)$ and $\bX= (L^2(\Omega))^3$, the norm of which are denoted by $\|\cdot\|_{L^2(\Omega)}$ or just $\|\cdot\|$.

\medskip

%Since the compact embedding holds for $H_0^1(\Omega)\hookrightarrow L^2(\Omega)$, we have the embedding such that $\bV \hookrightarrow  \bX$.
%A function $\bff\in \bX$ can be regarded as a functional over $\bV$, i.e., 
%$\langle \bff , \cdot \rangle =(\bff, \cdot)$.

Define $\mathcal{A} : \bV\to  \bV^\ast$  by,
$$
\langle \mathcal{A} [\bu],\bv \rangle := ( \epsilon \nabla \bu, \nabla \bv) \mbox{ for } \bu, \bv \in \bV \:.
$$
% Since 
% , the Ritz representation theorem assures the mapping 
% $\mathcal{A}^{-1}$ such that 
% $$
% \langle \mathcal{A}^{-1}\bff, \bv \rangle = (\bff,v) \mbox{ for all } v \in \bV \:.
% $$
%
With $\bff \in \bX$, defined $\mathcal{N} : \bV\to \bV^\ast$ by
$$
\langle \mathcal{N}[\bu],\bv \rangle :=  (\bff , \bv) - ( (\bu\cdot \nabla )\bu ,\bv)   \mbox{ for } \bu, \bv \in \bV \:.
$$
Let $\mathcal{F}:=\mathcal{A} - \mathcal{N}$.
% Define $\mathcal{F} : \bV\to  \bV^\ast$ by
% $\langle \mathcal{F}[\bu],\bv \rangle := \langle \mathcal{A}[\bu],\bv \rangle  +  \langle \mathcal{N}[\bu],\bv \rangle $. 
That is,
$$
\langle \mathcal{F}[\bu],\bv \rangle 
= ( \epsilon \nabla \bu, \nabla \bv) + ( (\bu\cdot \nabla )\bu ,\bv) - (\bff ,\bv)\:.
$$
Then the Navier--Stokes equation can be formulated as the equation of functional.
\begin{equation}
\label{eq:variational-eq-ns}
\mathcal{F}[\bu]=\mathcal{A}[\bu] - \mathcal{N}[\bu] =  0\:.
\end{equation}
The Fr\'echet derivative of $\mathcal{F}:\bV\to \bV^\ast$ 
at $\hat{\bu} \in \bV$ can be given by
$$
\langle \mathcal{F}'[\hat{\bu}] \bu, \bv \rangle = \langle \mathcal{A} [\bu],\bv \rangle - \langle \mathcal{N}'[\hat{\bu}]\bu, \bv \rangle 
=\epsilon (\nabla \bu, \nabla \bv) + ((\hat{\bu} \cdot \nabla ) \bu,\bv ) + ((\bu \cdot \nabla ) \hat{\bu}, \bv )\:.
$$ 

The following is the theoretical result from Girault-Raviart's book for exploring the solution existence of the Navier--Stokes equation.
\begin{theorem}{Theorem 2.2 (Chapter IV) of \cite{girault2012finite} }
Let ${N}$ and $\|f\|_{V^\ast}$ be defined by
$$
{N} := \sup_{u,v,w\in V} \frac{\int_\Omega ( w\cdot \nabla )u v \mbox{\:\em d} \Omega}{\|u\|_V \cdot \|v\|_V  \cdot \|w\|_V },
\quad 
\|f\|_{V^\ast}=\sup_{v\in V} \frac{(f,v)}{\|v\|_V } \:.
$$
If ${N} \cdot \|f\|_{V^\ast}/\epsilon^2 <1 $, then the Navier--Stokes equation has a unique solution in $V$.
\end{theorem}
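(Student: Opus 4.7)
The plan is to apply the Banach fixed point theorem to a Stokes-type iteration. For each $\bw\in \bV$, I would define $T(\bw) := \bu$, where $\bu\in\bV$ solves the linearised problem
$$
\epsilon (\nabla \bu, \nabla \bv) + ((\bw\cdot\nabla)\bu, \bv) = (\bff, \bv)\quad \text{for all } \bv\in\bV.
$$
Well-posedness of this linear problem is standard: because $\bw\in\bV$ is divergence-free and $\bu,\bv$ vanish on $\partial\Omega$, integration by parts gives the antisymmetry $((\bw\cdot\nabla)\bu,\bv) = -((\bw\cdot\nabla)\bv,\bu)$, so the trilinear form is coercive, and the Lax--Milgram lemma yields a unique $\bu$.

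Next I would show that $T$ stabilises a closed ball. Taking $\bv=\bu$ in the definition, the convection term vanishes by antisymmetry, leaving $\epsilon\|\bu\|_{\bV}^2 = (\bff,\bu)\leq \|\bff\|_{\bV^\ast}\|\bu\|_{\bV}$, hence $\|T(\bw)\|_{\bV}\leq \|\bff\|_{\bV^\ast}/\epsilon$. Thus $T$ maps the closed ball $B_r := \{\bv\in\bV : \|\bv\|_{\bV}\leq r\}$ into itself for any $r\geq \|\bff\|_{\bV^\ast}/\epsilon$. To verify that $T$ is a contraction on $B_r$, I would subtract the defining equations for $\bu_i = T(\bw_i)$ ($i=1,2$), rearrange the nonlinear term as
$$
((\bw_1\cdot\nabla)\bu_1,\bv) - ((\bw_2\cdot\nabla)\bu_2,\bv) = ((\bw_1\cdot\nabla)(\bu_1-\bu_2),\bv) + (((\bw_1-\bw_2)\cdot\nabla)\bu_2,\bv),
$$
test with $\bv = \bu_1-\bu_2$ so the first bracket vanishes by antisymmetry, and bound the remainder using the definition of $N$ to obtain $\epsilon\|\bu_1-\bu_2\|_{\bV}\leq N\,r\,\|\bw_1-\bw_2\|_{\bV}$. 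Choosing the smallest admissible $r = \|\bff\|_{\bV^\ast}/\epsilon$ turns the contraction constant into exactly $N\|\bff\|_{\bV^\ast}/\epsilon^2$, which is strictly less than $1$ by hypothesis. Banach's theorem then produces a unique fixed point in $B_r$, i.e., a solution of (\ref{eq:variational-eq-ns}).

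Finally, for uniqueness in the whole space $\bV$ rather than merely in $B_r$, I would note that any solution $\bu$ of the Navier--Stokes equation satisfies the a priori bound $\|\bu\|_{\bV}\leq \|\bff\|_{\bV^\ast}/\epsilon$, obtained by testing (\ref{eq:variational-eq-ns}) against $\bu$ itself and exploiting, once again, that the convection term vanishes for divergence-free fields with zero trace. Hence every solution automatically lies in $B_r$, and uniqueness in $B_r$ upgrades to uniqueness in $\bV$.

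The only genuinely delicate step is the cancellation of the convection term at three different places (coercivity of the linearisation, the self-map estimate, and the difference estimate); this relies essentially on $\bV$ being divergence-free and on the homogeneous boundary condition, so I expect no serious obstacle beyond careful bookkeeping. The remainder is a routine application of Lax--Milgram plus Banach's contraction principle, with the constant $N\|\bff\|_{\bV^\ast}/\epsilon^2<1$ emerging naturally as the product of the ball radius and the Lipschitz factor of the linearisation.
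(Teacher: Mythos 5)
Your proposal is correct. Note that the paper itself gives no proof of this statement --- it is quoted verbatim from Girault--Raviart --- so there is nothing internal to compare against; but your contraction argument is a complete and standard proof of the statement as it appears here. The three cancellations of the convection term (coercivity of the linearisation, the self-map bound, and the difference estimate) all follow from the antisymmetry $((\bw\cdot\nabla)\bu,\bv)=-((\bw\cdot\nabla)\bv,\bu)$ for divergence-free $\bw$ and $\bu,\bv$ with zero trace, which in $H^1_0$ requires a density argument but is classical; and the final step, that every solution automatically satisfies $\|\bu\|_{\bV}\le\|\bff\|_{\bV^\ast}/\epsilon$ and hence lies in the invariant ball, is exactly what upgrades uniqueness from $B_r$ to all of $\bV$. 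The one respect in which your route differs from the cited source is worth knowing: Girault--Raviart prove \emph{existence} unconditionally via a Galerkin approximation combined with Brouwer's fixed point theorem, and invoke the smallness condition ${N}\|f\|_{V^\ast}/\epsilon^2<1$ only for \emph{uniqueness}; your Banach contraction argument is more elementary and self-contained but consumes the smallness hypothesis for existence as well. Since the statement as quoted only asserts anything under that hypothesis, this costs nothing here.
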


This theoretical result helps to show solution existence if $\epsilon$ is not very small. 
In the upcoming numerical example section, we show an example with a small $\epsilon$ for which this theory fails to draw a conclusion about the solution existence; however, our proposed method works well.

\subsection{The main theorems for computer-assisted solution verification}

Below is the fundamental theorem used by our algorithm to verify the solution's existence. We cite the theorem in a general setting of spaces. 
Let $V$ be a Hilbert space and $\mathcal{F}:V\to V^\ast$ be a functional.
The following theorem provides a basic frame for investigating the solution of the equation $F[u]=0$.

\begin{theorem}[Newton--Kantorovich's theorem \cite{Newton--Kantorovich}]
\label{thm:Newton--Kantorovich}
Given $\hat{u} \in V$, assume that ${\mathcal F}'[\hat{u}]$ is regular and the following inequality holds with constant $\alpha>0$:
$$
\|{\mathcal F}'[\hat{u}]^{-1}{\mathcal F}[\hat{u}]\|_V \le \alpha\:.
$$
Let $B(\hat{u},2\alpha)(\subset V)$ be the closed ball centered at $\hat{u}$ and the radius being $2\alpha$. Assume that the following inequality holds for an open ball $D$ satisfying $B(\hat{u},2\alpha) \subset D$ along with the constant $\omega$,
$$
\| \mathcal{F}'[\hat{u}]^{-1} (\mathcal{F}'[v]-\mathcal{F}'[w]) \|_{V,V} 
\le \omega \|v-w\|_{V},\quad \forall v,w \in D\:.
$$
If $\alpha \omega \le1/2$ holds, then $\mathcal{F}[u]=0$ has a unique solution in $u\in B(\hat{u},\rho)$, where $\rho$ is given by
$$
\rho := \frac{ 1- \sqrt{1-2\alpha \omega} }{\omega}\:.
$$

\end{theorem}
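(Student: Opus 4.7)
The plan is to recast the problem as a fixed-point equation for the simplified Newton map $T: V \to V$ defined by $T(u) := u - \mathcal{F}'[\hat{u}]^{-1}\mathcal{F}[u]$. Since $\mathcal{F}'[\hat{u}]$ is assumed regular, a point $u$ satisfies $\mathcal{F}[u] = 0$ if and only if $T(u) = u$. I then intend to apply the Banach fixed-point theorem on the closed ball $\overline{B(\hat{u},\rho)}$, which is contained in $D$ because a short calculation using $\sqrt{1-x} \ge 1-x$ on $[0,1]$ gives $\rho \le 2\alpha$.

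The key analytical device is the first-order Taylor expansion with integral remainder,
\begin{equation*}
\mathcal{F}[u] = \mathcal{F}[\hat{u}] + \mathcal{F}'[\hat{u}](u-\hat{u}) + \int_0^1 \bigl(\mathcal{F}'[\hat{u}+t(u-\hat{u})] - \mathcal{F}'[\hat{u}]\bigr)(u-\hat{u})\, dt,
\end{equation*}
which after applying $\mathcal{F}'[\hat{u}]^{-1}$ yields the identity
\begin{equation*}
T(u) - \hat{u} = -\mathcal{F}'[\hat{u}]^{-1}\mathcal{F}[\hat{u}] - \int_0^1 \mathcal{F}'[\hat{u}]^{-1}\bigl(\mathcal{F}'[\hat{u}+t(u-\hat{u})]-\mathcal{F}'[\hat{u}]\bigr)(u-\hat{u})\, dt.
\end{equation*}
Using the two hypotheses termwise, I obtain the self-mapping estimate $\|T(u)-\hat{u}\|_V \le \alpha + \tfrac{\omega}{2}\|u-\hat{u}\|_V^2$. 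Insisting that the right-hand side be bounded by $\rho$ on the ball yields the quadratic inequality $\tfrac{\omega}{2}\rho^2 - \rho + \alpha \le 0$, whose smaller root is exactly the stated $\rho = (1-\sqrt{1-2\alpha\omega})/\omega$; the condition $\alpha\omega \le 1/2$ is precisely what makes this root real. Hence $T$ maps $\overline{B(\hat{u},\rho)}$ into itself.

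For contraction, I rewrite $T(u) - T(v) = \mathcal{F}'[\hat{u}]^{-1}\int_0^1 (\mathcal{F}'[\hat{u}]-\mathcal{F}'[v+t(u-v)])(u-v)\, dt$ and use the Lipschitz hypothesis to get $\|T(u)-T(v)\|_V \le \omega\, \rho\, \|u-v\|_V$ for $u,v \in \overline{B(\hat{u},\rho)}$. Since $\omega\rho = 1-\sqrt{1-2\alpha\omega} < 1$ whenever $\alpha\omega < 1/2$, the Banach contraction principle produces a unique fixed point of $T$ in $\overline{B(\hat{u},\rho)}$, hence a unique zero of $\mathcal{F}$ there.

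The main obstacle is the borderline case $\alpha\omega = 1/2$, where $\omega\rho = 1$ and the contraction argument degenerates. To cover it I would either pass to a slightly enlarged $\alpha$ and take limits, or invoke the standard Kantorovich refinement that replaces contraction by showing the Newton sequence $u_{n+1} = T(u_n)$ starting at $u_0 = \hat{u}$ is Cauchy via the majorant series $t_{n+1} = t_n + \tfrac{\omega}{2}(t_n - t_{n-1})^2/(1-\omega t_n)$ converging to $\rho$; uniqueness in this limiting case then follows from a separate convexity argument on the same quadratic majorant. A secondary technical point is justifying the integral Taylor formula for an operator $\mathcal{F}$ that is only Fréchet differentiable with Lipschitz derivative, which is standard but must be invoked explicitly.
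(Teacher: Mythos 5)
The paper does not prove this theorem at all --- it is quoted from the literature as a black box --- so there is no ``paper proof'' to match yours against; your submission is a self-contained argument, and it is essentially the standard one. Recasting $\mathcal{F}[u]=0$ as a fixed-point problem for the simplified Newton map $T(u)=u-\mathcal{F}'[\hat{u}]^{-1}\mathcal{F}[u]$, using the integral Taylor remainder together with the two affine-invariant hypotheses to get $\|T(u)-\hat{u}\|_V\le\alpha+\tfrac{\omega}{2}\|u-\hat{u}\|_V^2$ and $\|T(u)-T(v)\|_V\le\omega\rho\|u-v\|_V$, and identifying $\rho$ as the smaller root of $\tfrac{\omega}{2}\rho^2-\rho+\alpha=0$ is all correct, as is the observation $\rho\le 2\alpha$ that keeps the ball inside $D$. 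This buys a short, transparent proof of exactly the version the paper uses (with the inverse derivative folded into both hypotheses), at the cost of obtaining only the simplified Newton iteration rather than the quadratically convergent one --- which is irrelevant for the existence/uniqueness statement.

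The one place you are not done is the borderline case $\alpha\omega=1/2$, which the theorem explicitly includes and where $\omega\rho=1$ kills the contraction constant. You flag this honestly, but two details of your sketch deserve care. First, the majorant recursion you quote, $t_{n+1}=t_n+\tfrac{\omega}{2}(t_n-t_{n-1})^2/(1-\omega t_n)$, belongs to the full Newton iteration; for the map $T$ you actually defined, the matching majorant is $t_{n+1}=\alpha+\tfrac{\omega}{2}t_n^2$ with $t_0=0$, whose monotone convergence to $\rho$ gives $\|u_{m}-u_{n}\|_V\le t_m-t_n$ and hence a Cauchy sequence even at the borderline (use the sharper bound $\|T(u)-T(v)\|_V\le\tfrac{\omega}{2}(\|u-\hat{u}\|_V+\|v-\hat{u}\|_V)\|u-v\|_V$). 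Second, for uniqueness at the borderline the same sharper bound shows two distinct solutions would both have to sit exactly on the sphere of radius $\rho$ with equality in the triangle inequality along the joining segment; since $V$ is a Hilbert space this forces the two solutions to coincide. With those two points written out, the proof is complete.
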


In the application of the Newton--Kantorovich theorem to solution verification, the following quantities should be estimated explicitly.

\begin{enumerate}
\item [1)] Norm estimation for the inverse of $\mathcal{F}'[\hat{u}]$ : ~ $ \| \mathcal{F}'[\hat{u}]^{-1} \|_{V^\ast,V} \le K   $.
\item [2)] Residue error of $\hat{u}$ :  ~$\|\mathcal{F}[\hat{u}]\|_{V^\ast} \le \delta $.
\item [3)] Local continuity of $\mathcal{F}'$: ~$  \|\mathcal{F}'[v] - \mathcal{F}'[w]\|_{V,V^\ast} \le G\|v-w\|_{V}, \quad \forall v,w \in D$.
\end{enumerate}

Once the quantities $K$, $\delta$, and $G$ are evaluated, the constant $\alpha$ and $\omega$ can be given as 
$$
\alpha := K\delta,\quad \omega := KG
$$
If $K^2 G \delta \le 1/2$ holds, there exists a unique solution of $\mathcal{F}[u]=0$ in $B(\hat{u},\rho)$.

\medskip

In applying Newton--Kantorovich's theorem to the solution verification for the Navier--Stokes equation, the explicit estimation of $K$ is the most challenging part.  
The evaluation of $K$ is to estimate the norm of the inverse of a differential operator, which reduces to solving eigenvalue problems of operators. 
Since the involved eigenvalue problem is related to a non-self-adjoint differential operator (denoted by $\mathcal{K}$), it is not easy to deal with the eigenvalue problem directly. A choice is to 
apply the idea of M. Plum to consider $\mathcal{K}\cdot\mathcal{K}^\ast$ ($\mathcal{K}^\ast$: the conjugate operator of $\mathcal{K}$); see \S 9 of \cite{nakao2019numerical}. 
Here we turn to the method proposed by M. Nakao to avoid the eigenvalue estimation \cite{Nakao_etal2005}. Recent discussion on improvement of Nakao's method can be found in 
\cite{Sekine2020,Watanabe2020}.
Another approach for estimating $K$ can also be found in  \cite{Oishi1995} of S. Oishi. 
\medskip

Let us introduce Nakao's method in a general problem setting with a linear operator $\mathcal{A}:V\to V^\ast$ and a non-linear operator $\mathcal{N}:V\to V^\ast$ defined over the Hilbert space $V$. Let $\mathcal{F}:=\mathcal{A} - \mathcal{N}$.
Let $X$ be a Hilbert space such that the compact embedding $V \hookrightarrow X$ holds. 
Each $f\in X$ can be regarded as functional over $V$, i.e., $\langle f, v\rangle = (f,v)_X, \forall v \in V$. 
The operator $\mathcal{A}^{-1}:X\to V$ maps  ${f} \in X$ to the solution $u$ of the following variational equation: 
$$
\langle \mathcal{A} u, v \rangle = (f,v)_X, ~ \forall v \in V\:.
$$
Let $V_h$ be a subspace of $V$ and $P_h:V\to V_h$ be the projection under the inner product of $V$. The detailed  selection of $V_h$ in practical soluiton verification is described in the beginning of \S\ref{sec:approach-to-sub-problems}. Suppose the following {\em a priori} error estimation holds for the projection $P_h$: 

\begin{equation}
\label{eq:a_priori_est_epsilon_Ph}
\|(I-P_h) (\mathcal{A}^{-1} {f} ) \|_V \le C_{h,\mathcal{A}} \| {f} \|_X, \quad \forall f \in X \:.
\end{equation}

To give  estimation of  $\|\mathcal{F}'[\hat{u}]^{-1}\|_{V^\ast,V}$, 
one need to consider the mapping between $\phi$ and $u$: $\phi =  \mathcal{F}'[\hat{u}] u$.
Nakao's method proposed in \cite{Nakao_etal2005}  estimates $\|\mathcal{F}'[\hat{u}]^{-1}\|_{V^\ast,V}$ by decomposing $u$ by $P_h u + (I-P_h) u$ and estimating the variation of each part under the mapping $\mathcal{F}'[\hat{u}]$. Below, we quote the result of \cite{Nakao_etal2005} with the notation of this paper.

\begin{theorem}[Estimation of $K$ \cite{Nakao_etal2005}; see also a compact proof in \cite{Takayasu2013}]
\label{thm:nakao}
Suppose the following inequalities hold with quantities $\nu_1,\nu_2,\nu_3$,
\begin{eqnarray}
& & \| {P}_{h} {{\mathcal A}}^{-1} {\mathcal N}'[\hat{u}] \mathbf{u}_c \|_{V} \le \nu_1 \| \mathbf{u}_c \|_{V}, ~ \forall \mathbf{u}_c \in V_h^{\perp}, \\
& & \|\mathcal  N'[\hat{u}] \mathbf{u} \|_{V^\ast} \le \nu_2 \|\mathbf{u}\|_{V}, ~ \forall \mathbf{u} \in V,  \\
& & \| \mathcal N'[\hat{u}] \mathbf{u}_c \|_{V^\ast} \le \nu_3 \| \mathbf{u}_c\|_{\mathbf{V}}, ~ \forall \mathbf{u}_c \in V_h^{\perp}.
\end{eqnarray}
Here, $V_h^{\perp}$ is the orthogonal complement space of $V_h$ in  ${V}$.
Assume that the operator ${P}_{h}(I - {{\mathcal A}}^{-1} {\mathcal N}'[\hat{u}])|_{V_h} : V_h \rightarrow V_h$ is invertible and the following estimation holds along with the constant $\tau$
$$
\left\| \left( {P}_{h}(I - {{\mathcal A}}^{-1}{\mathcal N}'[\hat{u}])|_{V_h}  \right)^{-1} \right\|_{L(V,V)} \le \tau \:.
$$
Define $\kappa := (\tau \nu_1 \nu_2 + \nu_3)C_{h,\mathcal{A}}$. If $\kappa < 1$, then we have 
$$
\|  {\mathcal F}'[\hat{u}]  ^{-1} \|_{V^\ast, V} \le \|R\|_2,
$$
where $\| \cdot \|_2$ is the spectral norm for matrix and 
\begin{eqnarray*}
R:=\frac{1}{1-\kappa}\left( \begin{array}{c c}
\displaystyle
\tau \left( 1-\kappa+ { \tau \nu_1\nu_2} \: C_{h,\mathcal{A}}  \right) &
\displaystyle {\tau \nu_1}\\
\displaystyle    \tau \nu_2 \: C_{h,\mathcal{A}}  & \displaystyle 1
\end{array}
\right)\:.
\end{eqnarray*}
\end{theorem}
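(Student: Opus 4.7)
The plan is to decompose $u = u_h + u_c := P_h u + (I-P_h)u$, where $u = \mathcal{F}'[\hat{u}]^{-1}\phi$ for arbitrary $\phi \in V^*$, derive a coupled pair of scalar inequalities for $\|u_h\|_V$ and $\|u_c\|_V$, and bundle them into a $2\times 2$ linear system whose coefficient matrix is precisely $R$. The starting point is the rearranged fixed-point form
$$u = \mathcal{A}^{-1}\mathcal{N}'[\hat{u}]u + \mathcal{A}^{-1}\phi,$$
obtained from $(\mathcal{A}-\mathcal{N}'[\hat{u}])u = \phi$.

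First, I would apply $P_h$ to both sides and use $P_h u_h = u_h$, $P_h u_c = 0$ to isolate $P_h(I - \mathcal{A}^{-1}\mathcal{N}'[\hat{u}])|_{V_h} u_h$ on the left. Its assumed invertibility with operator norm bound $\tau$, together with the first hypothesis $\|P_h\mathcal{A}^{-1}\mathcal{N}'[\hat{u}] u_c\|_V \le \nu_1\|u_c\|_V$, yields
$$\|u_h\|_V \le \tau\nu_1\,\|u_c\|_V + \tau\,\|P_h\mathcal{A}^{-1}\phi\|_V.$$
Second, applying $I-P_h$, expanding $\mathcal{N}'[\hat{u}](u_h+u_c)$, and using the a priori estimate (\ref{eq:a_priori_est_epsilon_Ph}) with the $\nu_2$- and $\nu_3$-bounds applied to $u_h$ and $u_c$ respectively gives
$$\|u_c\|_V \le C_{h,\mathcal{A}}\nu_2\,\|u_h\|_V + C_{h,\mathcal{A}}\nu_3\,\|u_c\|_V + \|(I-P_h)\mathcal{A}^{-1}\phi\|_V.$$

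These two inequalities form a linear system in $(\|u_h\|_V,\|u_c\|_V)^{T}$ with right-hand side $(\|P_h\mathcal{A}^{-1}\phi\|_V,\|(I-P_h)\mathcal{A}^{-1}\phi\|_V)^{T}$. The smallness condition $\kappa := (\tau\nu_1\nu_2+\nu_3)C_{h,\mathcal{A}} < 1$ is exactly what makes the coefficient matrix invertible, and elimination produces the four entries of $R$ term-by-term. Finally, the orthogonal Pythagorean identities $\|u\|_V^2 = \|u_h\|_V^2 + \|u_c\|_V^2$ and $\|\mathcal{A}^{-1}\phi\|_V^2 = \|P_h\mathcal{A}^{-1}\phi\|_V^2 + \|(I-P_h)\mathcal{A}^{-1}\phi\|_V^2$ upgrade the componentwise bound into an operator bound with spectral-norm factor $\|R\|_2$.

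The principal place where care is needed is the algebraic elimination; in particular, the identity $1-\kappa+\tau\nu_1\nu_2 C_{h,\mathcal{A}} = 1-\nu_3 C_{h,\mathcal{A}}$ has to be used to recognize the displayed $(1,1)$ entry of $R$. A more substantive subtlety is that (\ref{eq:a_priori_est_epsilon_Ph}) requires its argument to lie in $X$, whereas $\mathcal{N}'[\hat{u}]u_h$ and $\mathcal{N}'[\hat{u}]u_c$ are a priori only functionals in $V^*$; one therefore must verify, in the concrete Navier--Stokes setting where the convection-type terms $(\hat{\bu}\cdot\nabla)\bu + (\bu\cdot\nabla)\hat{\bu}$ are genuine $L^2$ vector fields, that these functionals are represented by elements of $X$ with $X$-norm controlled by the same $\nu_2$, $\nu_3$ constants, so that the a priori estimate genuinely applies.
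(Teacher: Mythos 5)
The paper itself gives no proof of Theorem~\ref{thm:nakao} --- it is quoted from \cite{Nakao_etal2005} with a pointer to \cite{Takayasu2013} --- and your reconstruction is precisely the standard argument from those references: the $P_h$/$(I-P_h)$ splitting of the fixed-point form $u=\mathcal{A}^{-1}\mathcal{N}'[\hat u]u+\mathcal{A}^{-1}\phi$, the two coupled scalar inequalities, and the elimination do reproduce $R$ correctly (your identity $1-\kappa+\tau\nu_1\nu_2C_{h,\mathcal{A}}=1-\nu_3C_{h,\mathcal{A}}$ checks out, as does the componentwise-to-spectral-norm upgrade via the two Pythagorean identities). The one subtlety you flag is genuine and is exactly the point a careful reader should worry about: as transcribed, the $\nu_2,\nu_3$ hypotheses are $V^\ast$-bounds while \eqref{eq:a_priori_est_epsilon_Ph} needs an $X$-norm of $\mathcal{N}'[\hat u]u_h$ and $\mathcal{N}'[\hat u]u_c$, and the gap is closed in the concrete Navier--Stokes setting just as you describe, by bounding the convection terms as $L^2$ vector fields (which is in fact how the paper derives its numerical values of $\nu_2,\nu_3$ in \S 4).
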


\begin{remark}
{\bf Sub-problems in solution verification}
Newton--Kantorovich's theorem along with Nakao's method in Theorem \ref{thm:nakao} have been successfully applied to non-linear PDE problems. However, several sub-problems remained in verifying the solution to the Navier--Stokes equation, especially for the processing of the divergence-free condition.
\begin{itemize}
\item [a)] The {\em a priori} error estimation for the approximate solution to the Stokes equation, especially for 2D or 3D domains of general shapes.
\item [b)] The rigorous eigenvalue estimation for various differential operators. For example, to give the Poincare constant over a divergence-free space, one needs to solve the Stokes operator to have rigorous bounds for the eigenvalues.
\item [c)] The estimation of $\tau$ in Theorem \ref{thm:nakao} requires solving the norm of a linear operator restricted on a subspace of $V$ upon the divergence-free condition. However, the operator does not have an explicit representation matrix because the subspace introduced by the Scott-Vogelius FEM is implicitly defined under the divergence-free constraint conditions.  
\end{itemize}
\end{remark}

\vskip 0.5cm

\section{Approach in solving the three sub-problems}
\label{sec:approach-to-sub-problems}
In this section, we introduce the approaches to solve the three sub-problems mentioned in the previous section.  

Let $\mathcal{T}^h$ be a regular subdivision of $\Omega$ with tetrahedron elements. Let us introduce the finite element spaces that will be utilized in the following discussion.

% As to be mentioned in \S \ref{subsec:mesh-generation}, in constructing a strictly divergence-free subspace by the Scott-Vogelius type FEM space, a further subdiviion of elements  of $\mathcal{T}^h$ will be performed to have refined mesh. 

\paragraph{\em Discontinuous space $X_h$ of degree $d$}
$X_h$ is the set of piecewise polynomial of degree $d$ without the requirement of continuity.  The subspace of $X_h$ with function of zero average is denoted as $X_{h,0}$. Further, we define vector function spaces $\mathbf{X}_h:=\left(X_h\right)^3$ and 
$\mathbf{X}_{h,0}:=\left(X_{h,0}\right)^3$.

\paragraph{\em Conforming FEM space $U_h(\subset \left(H^1(\Omega)\right)^3)$ and $V_h (\subset V)$ of degree $k$.}

\begin{itemize}
\item Let $U_h$ be the set of piecewise polynomials of degree up to $k$, which also belongs to $H^1(\Omega)$. Define $\mathbf{U}_h:=\left({U}_h\right)^3$.
\item Let $U_{h,0}:=\{ u_h \in U_h | u_h = 0 \mbox{ on } \partial \Omega \}$, $\mathbf{U}_{h,0} :=\left({U}_{h,0}\right)^3 $.
\item Let $\mathbf{V}_{h}$ be the subspace  of $\mathbf{U}_{h,0}$ with member function satisfying the divergence-free condition, i.e., 
$\mathbf{V}_{h} = \{\bu_h\in \mathbf{U}_{h,0} ~|~ \mbox{div }\bu_h=0 \}=\mathbf{U}_{h} \cap {V}$.
\end{itemize}

\paragraph{\em Construction of $V_{h}$ }

Generally, it is difficult to construct $\mathbf{V}_{h}$ directly due to the divergence-free condition. We turn to utilize the 
Scott-Vogelius type FEM space,
$$
\mathbf{V}_{h} = \{ \mathbf{v}_h \in \mathbf{U}_{h,0} \:| \: (\mbox{div\:} \mathbf{v}_h, q_h)=0 \:\: \forall q_h \in X_h \}\:,
$$
where the degree $k$ of $V_h$ and the degree $d$ of $X_h$ satisfy $d=k-1$.

\paragraph{\em The Raviart--Thomas FEM space $RT_h$ of degree $m$}

The Raviart--Thomas space of degree $m$ is defined as follows. 
$$
RT_h:=\{ p_h \in H(\mbox{div};\Omega) \:\:|\:\: p_h|_K = (a+dx, b+dy,c+dz), a,b,c,d\in P^m(K) \}\:,
$$
where $P^m(K)$ denotes the set of polynomials with degree up to $m$ on element $K$.
Also, the tensor space with $\bp_h \in RT_h^3(\subset H(\mbox{div};\Omega)^3 )$ is denoted by $\mathbf{RT}_h$.

\medskip

The Crouzeix--Raviart FEM space will also be needed in solving the eigenvalue problems; see the introduction in \S\ref{sec:sub-problem-b}.  
In the discussion below, the selection of $k,d$ and $m$ satisfies $d = m = k$.

\medskip

\paragraph{Porjection error estimation}

Let $\pi_h :L^2(\Omega) \to X_h$ be the $L^2$-projection such that for every $u\in L^2(\Omega)$ over an element $K$ of the mesh, $(\pi_h u)|_K$ 
takes the average of $u$ over $K$.
The following  error estimation of $\pi_h$ holds
\begin{equation}
\label{eq:c_0_h}
\|u -\pi_h u\| \le \widehat{C}_{0,h} \|\nabla u \|, \quad \forall u \in H^1(\Omega)\:.
\end{equation}
Here $\widehat{C}_{0,h}=O(h)$ is the error constant that can be estimated with a concrete value. 
The projection $\pi_h$ can be naturally extended to $(L^2(\Omega))^3$, for which the same notation is used. The following error estmation of $\pi_h$ will be needed in the {\em a priori} error estimation for the Stokes equation. 
\begin{equation}
\label{eq:c_0_h_V}
\|\bu -\pi_h \bu\| \le {C}_{0,h} \|\nabla \bu \|, \quad \forall \bu \in \bV\:.
\end{equation}
It is easy to see that $\widehat{C}_{0,h}$ in \eqref{eq:c_0_h} provides an upper bound of ${C}_{0,h}$, due to the boundary condition and the divergence-free condition applied to $\bu \in \bV$. The concrete upper bounds of the two constants are provided in \S \ref{subsec:mesh-generation}.

\subsection{Sub-problem a): The {\em a priori} error estimation for the FEM solution to the Stokes equation}

The hypercircle method, also named by Prager--Synge's Theorem \cite{prager1947approximations}, 
has been successfully applied to the {\em a priori} error estimation for the Poisson equation \cite{Liu+Oishi2012, Kikuchi+Saito2007}.
Here, in a concise way, we introduce an extended version of the hypercircle and construct the {\em a priori} error estimation for the Stokes equation; for a detailed discussion of this topic, refer to  \cite{Liu-2021}.
\medskip
 
 Let us consdier the Stokes equation over domain $\Omega$: Given $\bff \in (L^2(\Omega))^3$, find \(\mathbf{u} \in \mathbf{V}\) such that,
\begin{equation}
    \label{eq:stokes}
    (\nabla \bu, \nabla \bv) = (\bff, \bv), \quad \forall \bv \in \bV.
\end{equation}
The above equation implies an operator $\Delta_s ^{-1}:(L^2(\Omega))^3\to \bV$, which maps the function $\bff$ to the solution $\bu$ of the Stokes equation.

The Stokes equation \eqref{eq:stokes} can be solved approximately in FEM space $\bV_h$. %By using the projection operator defined in \ref{eq:def-Ph}, 
Let $P_h$ be the projection $P_h:\bV \to \bV_h$ such that, for any $\bv \in \bV$
\begin{equation}
    \label{eq:def-Ph}
    (\nabla (\bv - P_h \bv) , \nabla \bv_h) = 0, \quad \forall \bv_h \in 
\bV_h.
\end{equation}

Next, we show the extended hypercircle for the Stokes equation that helps to construct the {\em a priori} error estimation to $(\bu -\bu_h)$.
\begin{lemma}[Extended Prager-Synge's theorem]
  \label{lemma:prager-synge}
	Given $\bff \in (L^2(\Omega))^3$, let $\bu$ be the solution to (\ref{eq:stokes}) corresponding to $\bff$.
	Suppose that \(\bp \in H(\Div;\Omega)^{3}\) satisfies,
\begin{equation}
  \mbox{\em div } \mathbf{p} + \nabla \phi + \mathbf{f} =0, \text{ for certain } \phi \in H^1(\Omega)\:.  
\end{equation}
%
%Here $\Divv \B{p}=(\Divv p_{1}, \Divv p_{2}, \Divv p_{3})$.
Then, for any \(\mathbf{v} \in \mathbf{V}\), the following Pythagorean equation holds:

\begin{equation}
\label{eq:hypercircle-eq}
\| \nabla \mathbf{u} - \nabla \mathbf{v} \|^2 + \|\nabla \mathbf{u} - \mathbf{p} \|^2 = \|\mathbf{p} - \nabla \mathbf{v} \|^2 \:. 
\end{equation}
\end{lemma}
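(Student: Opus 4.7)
The plan is to recognize this as a classical Pythagorean identity in $\left(L^2(\Omega)\right)^{3\times 3}$ and reduce it to an orthogonality statement. Writing $\bw := \bu - \bv$, note that $\bw \in \bV$ since both $\bu$ and $\bv$ do; in particular $\Divv \bw = 0$ and $\bw|_{\partial\Omega}=0$. Expanding the three squared norms in \eqref{eq:hypercircle-eq} and canceling the common $\|\nabla\bu\|^2$, $\|\bp\|^2$, $\|\nabla\bv\|^2$ contributions, the identity \eqref{eq:hypercircle-eq} collapses to the single requirement
\[
(\nabla \bu - \bp,\; \nabla \bu - \nabla \bv) = (\nabla \bu - \bp,\; \nabla \bw) = 0.
\]
So the entire proof amounts to establishing this orthogonality between the ``error from below'' $\nabla\bu - \bp$ and any admissible tangential direction $\nabla\bw$ with $\bw\in\bV$.

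Next I would split the orthogonality into two pieces. The variational form \eqref{eq:stokes} applied with test function $\bw\in\bV$ gives immediately $(\nabla\bu,\nabla\bw) = (\bff,\bw)$. For the $\bp$-piece, integration by parts (Green's formula, applied row-by-row since $\bp \in H(\Divv;\Omega)^3$) and the boundary condition $\bw|_{\partial\Omega}=0$ yield
\[
(\bp,\nabla\bw) \;=\; -(\Divv \bp,\; \bw).
\]
Substituting the hypothesis $\Divv \bp = -\nabla\phi - \bff$ gives $(\bp,\nabla\bw) = (\nabla\phi,\bw) + (\bff,\bw)$.

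The key cancellation is that the pressure contribution vanishes: integrating $(\nabla\phi,\bw)$ by parts produces $-(\phi,\Divv \bw)$ plus a boundary term, and both are zero thanks to $\Divv \bw = 0$ and $\bw|_{\partial\Omega}=0$. This is exactly the place where the divergence-free constraint built into $\bV$ is used, and it is also the only non-routine step; everything else is standard Green's formula. Combining gives $(\bp,\nabla\bw) = (\bff,\bw) = (\nabla\bu,\nabla\bw)$, so $(\nabla\bu - \bp,\nabla\bw)=0$, which establishes \eqref{eq:hypercircle-eq}.

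The proof requires no quantitative estimates and no choice of constants; the only hypotheses consumed are (i) the variational identity defining $\bu$, (ii) the distributional equation satisfied by $\bp$ and $\phi$, (iii) the homogeneous Dirichlet condition and divergence-free condition encoded in $\bV$, and (iv) enough regularity ($\bp\in H(\Divv;\Omega)^3$, $\phi\in H^1(\Omega)$, $\bw\in H^1_0(\Omega)^3$) to justify the two integrations by parts. I expect no real obstacle; the conceptual content is simply that in the presence of a divergence-free test space the auxiliary pressure $\phi$ drops out, which is precisely what makes the hypercircle idea extend from Poisson to Stokes.
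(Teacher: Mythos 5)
Your proposal is correct and follows essentially the same route as the paper: both reduce the Pythagorean identity to the vanishing of the cross term $(\nabla\bu-\bp,\nabla(\bu-\bv))$, establish it via the variational equation for $\bu$, Green's formula for $\bp$, and the cancellation of the $\nabla\phi$ contribution against the divergence-free and boundary conditions of $\bu-\bv$. The paper merely compresses these steps into a single chain of equalities, so there is no substantive difference.
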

\begin{proof}
The hypercircle comes from the vanishing of the cross term: 
$$
(\nabla \bu - \bp, \nabla (\bu -\bv ))
= (\bff + \Divv \bp , (\bu -\bv )) 
= (-\nabla \phi, (\bu -\bv ) ) = (\phi, \Divv (\bu - \bv))=0\:,
$$
where the divergence-free condition and the boundary conditions of $\bu$ and $\bv$ are utilized.
\end{proof}    
Let us follow the ideas of \cite{liu-2013-2, Liu-2021} to define the quantity $\kappa_h$
$$
\kappa_h = \max_{\mathbf{f}_h \in \mathbf{X}_h} \min_{\mathbf{p}_h\in \mathcal{RT}_h, \mathbf{v}_h \in \bV_{h}}  
\frac{\|\mathbf{p}_h - \nabla \mathbf{v}_h\|}{\|\mathbf{f}_h\|}\:.
$$
where the minimization respect to  $\mathbf{p}_h$ is subject to the condition 
$$
 {\nabla} \cdot \mathbf{p}_h + \nabla \phi_h +  \mathbf{f}_h=0 \text{ for certain } \phi_h \in U_h\:.
$$
Notice that for each $\bff_h \in \bX_h$, the minimization of $\|\mathbf{p}_h - \nabla \mathbf{v}_h\|$ is to find $\bu_h$ and $\bp_h$ that minize $\|\nabla \bu - \bu_h\|$ and $\|\nabla \bu - \bp_h\|$, respectively. The computation of $\kappa_h$ requires  to solve a matrix eigenvalue problem.

\medskip

In Thereom \ref{thm:a-priori-est-for-stokes}, we conostruct an explicit {\em a priori} error estimation for $\bu_h$.
\begin{theorem}
\label{thm:a-priori-est-for-stokes}
Given $\bff \in (L^2(\Omega))^3$, 
let $u$ be the solution of \eqref{eq:stokes}.
Then, 
\begin{equation}
\label{eq:c_h}
\| \B{u} -  P_h \bu  \| \le C_h \|\nabla (\B{u} -  P_h \bu ) \|,
\quad
\|\nabla (\B{u} - P_h \B{u}) \| \le C_h \|\B{f}\|\:,
\end{equation}
where $C_h :=\sqrt{\kappa_h^2 + C_{0,h}^2}$ and $C_{0,h}$ is the error estiamtion constant of  $\pi_h$ applied to $\bV$; see \eqref{eq:c_0_h_V}.

\end{theorem}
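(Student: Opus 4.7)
The plan is to establish the second inequality first and then deduce the first via an Aubin--Nitsche duality argument. The difficulty for the second inequality is that the extended Prager--Synge identity (Lemma~\ref{lemma:prager-synge}) requires an equilibrated flux with respect to the full load $\bff$, while $\kappa_h$ only delivers such an object for loads in $\bX_h$. I would therefore split $\bff = \pi_h\bff + (\bff - \pi_h\bff)$ and, by linearity of $\Delta_s^{-1}$, write $\bu = \bu^{(1)} + \bu^{(2)}$, with $\bu^{(i)}$ the Stokes solution associated to the $i$-th piece.

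For $\bu^{(1)}$ the load lies in $\bX_h$, so the definition of $\kappa_h$ supplies $\bp_h \in \bRT_h$, $\bv_h \in \bV_h$ and $\phi_h \in U_h$ satisfying $\Divv \bp_h + \nabla\phi_h + \pi_h\bff = 0$ and $\|\bp_h - \nabla\bv_h\| \le \kappa_h\|\pi_h\bff\|$; Lemma~\ref{lemma:prager-synge} applied to $\bu^{(1)}$ then yields, after dropping the nonnegative $\|\nabla\bu^{(1)} - \bp_h\|^2$ term,
$$\|\nabla(\bu^{(1)} - \bv_h)\| \le \|\bp_h - \nabla\bv_h\| \le \kappa_h\|\pi_h\bff\|.$$
For $\bu^{(2)}$, testing its variational equation with $\bu^{(2)}$ itself and exploiting the orthogonality $(\bff - \pi_h\bff, \pi_h\bu^{(2)}) = 0$ (which follows from self-adjointness of the $L^2$-projection $\pi_h$) together with the projection estimate~\eqref{eq:c_0_h_V} yields $\|\nabla\bu^{(2)}\| \le C_{0,h}\|\bff - \pi_h\bff\|$.

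The main obstacle is combining these two estimates in such a way that the final constant is exactly $C_h = \sqrt{\kappa_h^2 + C_{0,h}^2}$ rather than the cruder $\kappa_h + C_{0,h}$. Using the best-approximation property of the orthogonal projection $P_h$ together with the triangle inequality,
$$\|\nabla(\bu - P_h\bu)\| \le \|\nabla(\bu - \bv_h)\| \le \|\nabla(\bu^{(1)} - \bv_h)\| + \|\nabla\bu^{(2)}\| \le \kappa_h\|\pi_h\bff\| + C_{0,h}\|\bff - \pi_h\bff\|,$$
and then applying Cauchy--Schwarz in $\mathbb{R}^2$ to the pairs $(\kappa_h, C_{0,h})$ and $(\|\pi_h\bff\|, \|\bff - \pi_h\bff\|)$, combined with the $L^2$-Pythagorean identity $\|\bff\|^2 = \|\pi_h\bff\|^2 + \|\bff - \pi_h\bff\|^2$ that holds because $\pi_h$ is an orthogonal projection, delivers $\|\nabla(\bu - P_h\bu)\| \le C_h\|\bff\|$.

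Finally, for the first inequality I would invoke an Aubin--Nitsche-type duality argument: let $\bw \in \bV$ solve the auxiliary Stokes problem with right-hand side $\bu - P_h\bu$. Since $\bu - P_h\bu$ is $\bV$-orthogonal to $\bV_h$ by definition of $P_h$, Galerkin orthogonality gives
$$\|\bu - P_h\bu\|^2 = (\nabla\bw,\, \nabla(\bu - P_h\bu)) = (\nabla(\bw - P_h\bw),\, \nabla(\bu - P_h\bu)),$$
and applying the second inequality (just proved) to $\bw$ bounds $\|\nabla(\bw - P_h\bw)\| \le C_h\|\bu - P_h\bu\|$; cancelling one factor of $\|\bu - P_h\bu\|$ closes the argument.
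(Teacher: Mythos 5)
Your proposal is correct and follows essentially the same route as the paper: you split the load as $\bff = \pi_h\bff + (\bff-\pi_h\bff)$, handle the discrete part via the hypercircle and $\kappa_h$, bound the remainder by $C_{0,h}\|\bff-\pi_h\bff\|$ exactly as in the paper's estimate for $\|\nabla(\bu-\overline{\bu})\|$, combine via the best-approximation property of $P_h$, and close the $L^2$ bound by Aubin--Nitsche. Your explicit Cauchy--Schwarz-plus-Pythagoras step explaining why the constant is $\sqrt{\kappa_h^2+C_{0,h}^2}$ rather than $\kappa_h+C_{0,h}$ is a detail the paper's sketch leaves implicit, and it is a welcome addition.
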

\begin{proof} Here is a sketch of the proof; see \cite{Liu-2021} for a full version.  
Let $\bff_h :=\pi_h \bu$ and $\overline{\bu}:= \Delta_s^{-1} \bff_h$. From the definition of $\kappa_h$ and the hypercircle (\ref{eq:hypercircle-eq}) in Lemma \ref{lemma:prager-synge} with $f$ replaced by $\bff_h$, we have
\begin{equation}
    \label{eq:local-est-kappah}
    \|\nabla (I-P_h)\overline{\bu} \| \le \kappa_h \|\bff_h\| \:.
\end{equation}
Also, by taking $\bv=(\bu-\overline{\bu})$ in the following equation,
$$
(\nabla (\bu-\overline{\bu}), \nabla \bv)
=(  \bff - \bff_h, \bv ) =  ( \bff - \bff_h, (I-\pi_h) \bv ) \le 
C_{0,h} \|\bff - \bff_h\| \cdot \|\nabla \bv \|\:,
$$
we have
\begin{equation}
\label{eq:local-est-ch}
    \|\nabla (\bu-\overline{\bu})\| \le C_{0,h} \|\bff - \bff_h\|
\end{equation}
Additionally, from the orthogonality of $P_h$, we have
 \begin{equation}
 \label{eq:local-est-best-approximation}
\|\nabla (\bu - P_h \bu)\| \le \|\nabla (\bu - P_h \overline{\bu})\| \le \|\nabla (\bu - \overline{\bu})\| +\|\nabla (\overline{\bu} - P_h \overline{\bu})\| \:. 
 \end{equation}
Thus, the estimation of $\|\nabla (\B{u} - P_h \B{u}) \|$ is available 
by combining \eqref{eq:local-est-kappah}, \eqref{eq:local-est-ch} and \eqref{eq:local-est-best-approximation}. The estimation of $\| \B{u} - P_h \B{u} \|$ can be obtained by further applying the Aubin-Nitsche technique.
\end{proof}

For any $\bff\in \bV$, we have
$$
\|\nabla(I-P_h) ( \Delta_s ^{-1} \mathbf{f} ) \|\le C_h \| \mathbf{f} \| \:.
$$
Since each $\mathbf{f}\in (L^2(\Omega))^3$ can be regarded as a functional in $\bV^\ast$ such that $\langle \bff, \bv \rangle:=(\bff, \bv)$, we have $\mathcal{A}^{-1} \bff = \Delta_s ^{-1} \bff/{\epsilon} \in \bV$ and 
\begin{equation}
\label{eq:a_priori_est_epsilon}
\|\nabla(I-P_h) (\mathcal{A}^{-1} \mathbf{f} ) \|\le C_h/\epsilon \| \mathbf{f} \| \:.
\end{equation}
Hence, we can take $C_{h, \mathcal{A}}=C_h/\epsilon$ in (\ref{eq:a_priori_est_epsilon_Ph}). 
By applying the $L^2$-norm error estimation of $P_h$ as given in (\ref{eq:a_priori_est_epsilon}), we have
$$
\|(I-P_h) (\mathcal{A}^{-1} \mathbf{f} ) \| \le
C_h \|\nabla(I-P_h) (\mathcal{A}^{-1} \mathbf{f} ) \| \le C_h^2 / \epsilon  \| \mathbf{f} \| \:.
$$

\subsection{Sub-problem b): Explicit eigenvalue estimation of differential operators}
\label{sec:sub-problem-b}

The eigenvalue problems appear in estimating various constants in the error analysis.
For example, the estimation of constant $C_{0,h}$ in the average interpolation error estimation is related to the eigenvalue problem of Stokes operator over tetrahedron elements with a Neumann boundary condition. 
Generally, it is difficult to give lower eigenvalue bounds for the operators over domains of general shapes. In \cite{Liu+Oishi2012,Liu2015, You-Xie-Liu-2019, Liu2020-JCAM}, the finite element methods are adopted to provide rigorous eigenvalue bounds in an efficient and easy-to-implement way. 
\medskip

The eigenvalue estimation method proposed in \cite{Liu2015} is stated under the following general function space settings. 
\begin{itemize}
    \item [(A1)] Let $V$ be a seperable Hilbert space $V$ and $V_h$ be its finite discretized space.  
    \item [(A2)] Let $a(\cdot, \cdot)$ and $b(\cdot, \cdot)$ be two strictly postivie symmetric bilinear forms over $V+V_h$. For the norms $\|\cdot\|_b$ and $\|\cdot\|_a$ introudced by the bilinear forms, assume that $\|\cdot\|_b$ is compact respect to $\|\cdot\|_a$.
\end{itemize}
Under the assumption (A1) and (A2), let us consider the following two eigenvalue problems.

(E) Objective eigenvalue problem: Find $u \in V$ and $\lambda \in R$ such that,
\begin{equation}
\label{eq:eig-variational-general}
a(u, v) = \lambda b(u, v),\quad \forall v \in V\:.
\end{equation}

($E_h$) Approximation to (E): Find $u_h \in V_h$ and $\lambda_h \in R$ such that,
\begin{equation}
    \label{eq:eig-variational-general-h}
    a(u_h, v_h) = \lambda_h b(u_h, v_h),\quad \forall v_h \in V_h\:.
\end{equation}
The eigenvalues of the eigenvalue problem (\ref{eq:eig-variational-general}) and (\ref{eq:eig-variational-general-h}) are denoted by  $\lambda_{1}\le \lambda_{2} \le \cdots $ and
 $\lambda_{h,1}\le \lambda_{h,2} \le \cdots \le \lambda_{h,n}$ ($n=\mbox{Dim}(V_h)$), 
 respectively.

The following theorem provides lower bounds for the objective eigenvalues of (E).
\begin{theorem}[Theorem 2.1 of \cite{Liu2015}] 
\label{thm:liu-eig-bound}
Suppose the following inequality holds for the projection $P_h:V \to V_h$ with respect to the inner product $a(\cdot, \cdot)$:
$$
\|u-P_h u\|_{b} \le C_{h,P_h} \|u-P_h u\|_{a},\quad \forall u \in V \:.
$$
Then, a lower bound for the objective eigenvalue $\lambda_i$ is given by
$$
\lambda_i \ge \frac{\lambda_{h,i}}{1+C_{h,P_h}^2\lambda_{h,i}},\quad (i=1,\cdots, n)\:.
$$
\end{theorem}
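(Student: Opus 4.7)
The plan is to prove the bound via the Courant--Fischer max-min characterization,
$$
\lambda_i \;=\; \max_{\substack{S \subset V \\ \mathrm{codim}\, S \,=\, i-1}} \ \min_{0 \neq f \in S} \frac{a(f,f)}{b(f,f)},
$$
by exhibiting one specific test subspace $S$ whose associated Rayleigh-quotient infimum already exceeds the claimed bound $\lambda_{h,i}/(1 + C_{h,P_h}^2 \lambda_{h,i})$.

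First, I take $S$ to be the $a$-orthogonal complement in $V$ of $\mathrm{span}\{u_{h,1},\ldots,u_{h,i-1}\}$, where $u_{h,k}$ denote the first $i-1$ discrete eigenfunctions of $(E_h)$; this subspace has codimension $i-1$ in $V$. For any $f \in S$, I split $f = g + r$ with $g := P_h f \in V_h$ and $r := (I-P_h)f$. Because $P_h$ is $a$-orthogonal, the condition $a(f,u_{h,k})=0$ transfers to $a(g,u_{h,k})=0$ for each $k<i$, so $g$ lies in the span of $u_{h,i},\ldots,u_{h,n}$. The discrete spectral expansion of $g$ in the $a$-orthonormal (and $b$-orthonormal) basis of $V_h$ then yields $a(g,g) \ge \lambda_{h,i}\,b(g,g)$; in parallel, the hypothesis of the theorem gives $b(r,r)\le C_{h,P_h}^2\,a(r,r)$.

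Next, I combine the Pythagorean identity $a(f,f)=a(g,g)+a(r,r)$ (a direct consequence of $a$-orthogonality of $P_h$) with the expansion $b(f,f)=b(g,g)+2b(g,r)+b(r,r)$, and I control the indefinite cross term via Young's inequality $2|b(g,r)|\le \epsilon\,b(g,g)+\epsilon^{-1} b(r,r)$. A short calculation with the calibrated choice $\epsilon := C_{h,P_h}^2\lambda_{h,i}$ will make the residual $b(g,g)$ contribution be absorbed by the extra $C_{h,P_h}^2\lambda_{h,i}\,a(g,g)$ generated from Pythagoras, and the residual $b(r,r)$ contribution be absorbed by $a(r,r)$ through the projection error hypothesis. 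The outcome is the pointwise estimate
$$
(1 + C_{h,P_h}^2\lambda_{h,i})\,a(f,f) \;\ge\; \lambda_{h,i}\,b(f,f) \qquad \forall f \in S,
$$
which, substituted into the max-min characterization, upgrades to the lower bound on $\lambda_i$ stated in the theorem.

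The main obstacle is precisely the indefinite cross term $b(g,r)$: the $a$-orthogonality of $P_h$ tells us nothing about the $b$-pairing between $g$ and $r$, and without it the discrete Rayleigh bound on $g$ and the hypothesis bound on $r$ cannot be combined directly. The argument lives or dies on the single choice of Young weight $\epsilon=C_{h,P_h}^2\lambda_{h,i}$, picked so that the four bookkeeping terms pair up and cancel exactly; any other scaling leaves a residue that prevents the closure. The remaining ingredients---codimension count of $S$, discrete Rayleigh bound on $g$, and orthogonal decomposition---are routine, although one should also verify that the argument survives the degenerate case when $\{u_{h,1},\ldots,u_{h,i-1}\}$ is linearly dependent (which causes no real trouble since $S$ only becomes larger and the codimension count only drops).
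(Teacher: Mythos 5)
Your argument is correct, but note first that the paper itself contains no proof of this statement: it is quoted as Theorem 2.1 of \cite{Liu2015}, so the only comparison available is with the proof in that reference. Writing $C:=C_{h,P_h}$, the argument there runs in the opposite direction: it applies the \emph{discrete} min--max principle to the test space $P_hE_i$, where $E_i$ is the span of the first $i$ \emph{exact} eigenfunctions, uses the eigenfunction relation $a(u,\cdot)=\lambda\,b(u,\cdot)$ to tame the cross term $b(u-P_hu,P_hu)$ and arrive at $\lambda_{h,i}\le \lambda_i/(1-C^2\lambda_i)$, and must separately dispose of the degenerate case where $P_h$ is not injective on $E_i$ (equivalently $C^2\lambda_i\ge 1$). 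You instead apply the \emph{continuous} max--min principle to the subspace of $V$ annihilated by the functionals $f\mapsto a(f,u_{h,k})$, $k<i$, built from the \emph{discrete} eigenfunctions, and close the estimate with Young's inequality at the calibrated weight $\epsilon=C^2\lambda_{h,i}$. I verified the bookkeeping: with this weight, $\lambda_{h,i}(1+\epsilon)\,b(g,g)\le(1+C^2\lambda_{h,i})\,a(g,g)$ follows from the discrete Rayleigh bound on $g$, and $\lambda_{h,i}(1+\epsilon^{-1})\,b(r,r)=\lambda_{h,i}b(r,r)+C^{-2}b(r,r)\le(1+C^2\lambda_{h,i})\,a(r,r)$ follows from the projection hypothesis, so the pointwise inequality $(1+C^2\lambda_{h,i})a(f,f)\ge\lambda_{h,i}b(f,f)$ and hence the theorem hold. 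Your route buys two things: the conclusion arrives directly in the stated form $\lambda_i\ge\lambda_{h,i}/(1+C^2\lambda_{h,i})$ with no intermediate restriction $C^2\lambda_i<1$, and no injectivity case distinction is needed (a drop in the codimension of your test space only enlarges it and strengthens the bound, as you observe). What it costs is reliance on the max--min characterization over codimension-$(i-1)$ subspaces of the infinite-dimensional problem, which is legitimate here because assumption (A2) guarantees a purely discrete spectrum; note also that your constraints $a(\cdot,u_{h,k})=0$ remain meaningful even when $V_h\not\subset V$ (the Crouzeix--Raviart setting actually used in \S\ref{sec:sub-problem-b}), since $a(\cdot,\cdot)$ is defined on $V+V_h$.
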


Below, let us apply Theorem \ref{thm:liu-eig-bound} to solve the  eigenvalue problem of Stokes differenital operator: Find $\bu :\Omega \to \mathbb{R}^3$,
$p: \Omega \to \mathbb{R}$ and $\lambda \in \mathbb{R}$ such that
\begin{equation}
\label{eq:eig-stokes-dirichlet}
-\Delta \bu +\nabla p= \lambda \bu, ~~ \Divv{\bu}=0 \mbox{ in } \Omega; \quad \bu=0 \mbox{ on } \partial \Omega.
\end{equation}
The variational formulation of (\ref{eq:eig-stokes-dirichlet}) is: Find $\bu \in \bV$ and $\lambda \in \mathbb{R}$ such that,
\begin{equation}
\label{eq:eig-stokes-variational}    
(\nabla \bu, \nabla \bv) = \lambda (\bu, \bv),\quad \forall \bv \in \bV\:.
\end{equation}
Let $\lambda_1$ be the first eigenvalue of the above eigenvalue problem, then $C_P(\Omega)=1/\sqrt{\lambda_1}$ is just 
the Poincar\'e constant that makes the following embedding equality holds.
$$
\|\bv\|_{L^2(\Omega)} \le C_P(\Omega) \|\nabla \bv\|_{L^2(\Omega)},\quad \forall \bv \in \bV\:.
$$ 

% \paragraph{Eigenvalue problem of Laplacian} Find $\lambda \in \mathbb{R}$ and $u$ such that 
% \begin{equation}
% \label{eq:laplacian-dirichlet}
% -\Delta u = \lambda u \mbox{ in } \Omega; \quad u=0 \mbox{ on } \partial \Omega.
% \end{equation}

In application of Thereom \ref{thm:liu-eig-bound}, we take the following function space setting.
$$
V:=\bV, \quad V_h:= \bV_h^{\CR}, 
$$
where $\bV_h^{\CR}$ is the  Crouzeix--Raviart FEM space with local divergence-free space property.  That is,
$$
\bV_h^{\CR} :=\{\bv_h \in \left(V_h^{\CR} \right)^3 \:|\: \Divv (\bv_h|_K) =0, \mbox{ for each }K\in \mathcal{T}^h \}\:.
$$
Here, $V_h^{\CR}$ is the Crouzeix-Raviart FEM space under the discretized homogeneous Dirichlet condition, i.e., every $v_h\in V_h^{\CR}$ 
has zero integral on each boundary facet of the mesh. 
Under this setting, \cite{Liu2015} provides an explicit value of $C_{h,P_h}$, which further leads to the following eigenvalue bounds.
$$
\lambda_{k} \ge \frac{\lambda_{h,k}}{1+(0.3804h)^2\lambda_{h,k} }\quad (k=1,2, \cdots, \mbox{Dim}(\bV_h^{\CR}))\:. 
$$
Here, $h$ is the largest diameter of the tetrahedra elements in the mesh.

For more results about bounding eigenvalues and various error constants, refer to \cite{Liu2017,liu-2013-2,Kikuchi+Liu2007_2,Kobayashi2011,Kobayashi2015}.

% Here is list of other eienvalue problems needed in evaluting the quantities in the solution verification.
% \begin{itemize}
%     \item $C_{0,h}$
%     \item $\lambda_1$: To be used in estimating $G$. See ... 
% \end{itemize}
% For a general disucssion of eigenvalue estimation, refer to \cite{Liu2020-JCAM}.

\subsection{Sub-problem c): Norm estimation of linear operator under divergence-free condition}
\label{sec:sub-problem-c}
% \mynote{Several things to be remark: 
% is tau must larger or equal to 1?}

The evalution of $\tau$ is to  
evalute the norm of operator $T^{-1}$ (see definition in (\ref{eq:def-T})) over the Scott-Vogelius FEM space $\bV_h$. 
However, since $\bV_h$ is not explicit constructed, it is difficult to evaluate 
$\tau$ from the definition by 
formulating the representation matrix for $T^{-1}$ under the basis of $\bV_h$.  
Here, let us introduce a lemma that helps to evaluate $\tau$ over space $\bU_{h,0}$, rather than divergence-free subspace subspace $\bV_h$.  

\medskip

The lemma below is formulated under a general function space setting.
\begin{lemma} []
Given Hilbert space $U$ with inner product $\langle \cdot, \cdot \rangle_U $, let $M:U\rightarrow U$ be a linear mapping $M$ and let $V:=M U \subset U$.
Suppose that the dual operator $M^\ast$ of $M$ satisfies $M^\ast U \subset V$. 
Then
$$
\max_{v \in V } \frac{\|Mv\|_{U}}{\|v\|_{U}}  = \max_{v \in U } \frac{\|Mv\|_{U}}{\|v\|_{U}} \:.
$$
\end{lemma}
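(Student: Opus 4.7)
The plan is to denote the right-hand side by $\beta := \sup_{u \in U,\, u \neq 0} \|Mu\|_U/\|u\|_U$, the ordinary operator norm of $M$, and the left-hand side by $\alpha := \sup_{v \in V,\, v \neq 0} \|Mv\|_U/\|v\|_U$. One direction, $\alpha \le \beta$, is immediate because $V \subset U$. The substance is in the reverse inequality $\beta \le \alpha$, and it is precisely the hypothesis $M^{\ast} U \subset V$ that makes this reachable.

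The key step is to feed adjoint images into the restricted bound. For an arbitrary $u \in U$, the hypothesis places $M^{\ast} u \in V$, so applying the definition of $\alpha$ to the vector $M^{\ast}u$ gives
$$
\|M M^{\ast} u\|_U \;\le\; \alpha \, \|M^{\ast} u\|_U.
$$
Taking the supremum over unit vectors $u \in U$ yields $\|M M^{\ast}\|_{U,U} \le \alpha \, \|M^{\ast}\|_{U,U}$. Invoking the standard Hilbert-space identities $\|M^{\ast}\|_{U,U} = \|M\|_{U,U}$ and the $C^{\ast}$-identity $\|M M^{\ast}\|_{U,U} = \|M\|_{U,U}^{2}$, this rearranges to $\beta^{2} \le \alpha \beta$. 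If $\beta = 0$ the conclusion is vacuous; otherwise dividing by $\beta$ gives $\beta \le \alpha$, which together with the trivial direction proves $\alpha = \beta$.

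I do not anticipate a serious obstacle: the argument reduces to a single application of the hypothesis followed by the $C^{\ast}$-identity. The only small points to treat carefully are (i) the distinction between $\max$ and $\sup$ — in the intended application $U = \bU_{h,0}$ and $V = \bV_h$ are finite dimensional so that the suprema are attained, and the abstract statement should be read with suprema in the general case — and (ii) confirming that $V = MU$ really is a subspace on which $\|M\cdot\|_U/\|\cdot\|_U$ is well defined, which is automatic in the finite-dimensional setting where $V$ is closed. Once these are noted, the proof is essentially the three lines above.
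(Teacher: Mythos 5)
Your proof is correct, but it takes a genuinely different route from the paper's. The paper argues pointwise: it takes a maximizer $u$ of the Rayleigh quotient $R(v)=\|Mv\|_U^2/\|v\|_U^2$ over $U$, orthogonally decomposes $u=u_1+u_2$ with $u_1\in V$ and $u_2\in V^{\perp}$, and uses the hypothesis $M^{\ast}U\subset V$ to show that the cross and tail terms vanish, namely $\langle Mu,Mu\rangle_U=\langle Mu_1,Mu_1\rangle_U+\langle M^{\ast}M(2u_1+u_2),u_2\rangle_U=\langle Mu_1,Mu_1\rangle_U$; combined with $\|u\|_U\ge\|u_1\|_U$ this gives $R(u)\le R(u_1)$ and hence the equality of the two maxima. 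You instead run an operator-norm argument: feeding $M^{\ast}u\in V$ into the restricted bound gives $\|MM^{\ast}\|\le\alpha\|M^{\ast}\|$, and the identities $\|M^{\ast}\|=\|M\|$ and $\|MM^{\ast}\|=\|M\|^2$ yield $\beta^2\le\alpha\beta$. Both are valid; the trade-off is that the paper's decomposition is elementary and self-contained and, moreover, exhibits an explicit element of $V$ (the $V$-component of the maximizer) at which the left-hand maximum is attained, whereas your argument is shorter modulo the $C^{\ast}$-identity and, since it never requires a maximizer to exist, applies verbatim with suprema to bounded operators on infinite-dimensional spaces --- which matches your own remark about reading $\max$ as $\sup$ outside the finite-dimensional application. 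Your two cautionary points (attainment of the supremum, and $V=MU$ being a closed subspace in the finite-dimensional case) are well taken and are implicitly assumed in the paper's version as well.
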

\begin{proof}
Suppose that 
$$
t:= \max_{v \in U } R(v)=R(u), \quad (R(v):=\frac{\|Mv\|^2_U}{\|v\|^2_U})\:.
$$ 
Decompose $u$ by $u=u_1+u_2$ such that 
$u_1 \in V$ and $u_2 \in V^\perp$, where $V^\perp$ is the orthogonal complement subspace of $V$ in $U$. 
Then, 
\begin{eqnarray*}
\langle M u,  M u \rangle_U &=& \langle M u_1,  M u_1 \rangle_U + 2\langle M u_1,  M u_2 \rangle_U + \langle M u_2,  M u_2 \rangle_U \\
&=& \langle M u_1,  M u_1 \rangle_U + \langle M^\ast M (2 u_1+u_2),  u_2 \rangle_U \\
&=& \langle M u_1, M u_1 \rangle_U\:,
\end{eqnarray*}
since $M^\ast M ( 2u_1 + u_2) \in V$ and $u_2 \in V^\perp$.
Noticing that $\|u\|_U \ge \|u_1\|_U $, we have
$$
t= R(u) \le R(u_1) \le \max_{v \in V } R(v) \le \max_{v \in U }R(v)  = t \:.
$$
The above inequality leads to the conclusion.
\end{proof}

Let us define a mapping $T:V_h \to V_h$, 
\begin{equation}
    \label{eq:def-T}
T:={P}_{h}(I - {{\mathcal A}}^{-1}{\mathcal N}'[\hat{u}])|_{V_h}\:.
\end{equation}
Let $\bw_h:=T\bu_h$, then the following equation holds for $w_h$ and $u_h$.
$$
(\nabla \bw_h, \nabla \bv_h) = (\nabla \bu_h, \nabla \bv_h) - \frac{1}{\epsilon} (N'[\hat{\bu}]\bu_h, \bv_h),\quad \forall \bv_h \in \bV_h\:.
$$
The existence of the reverse of mapping $T$, i.e., $\bu_h = T^{-1} \bw_h$, can be confirmed by check the regularity of the matrices for the above equation.

Let $M:\bU_{h,0}\rightarrow \bV_h$ be the linear operator that for any $\bw_h \in \bU_{h,0}$, $\bu_h = M\bw_h \in \bV_h$ satisfies
\begin{equation}
    \label{eq:def-M-dual}
(\nabla \bu_h, \nabla \bv_h) - \frac{1}{\epsilon} (N'[\hat{\bu}]\bu_h, \bv_h) = (\nabla \bw_h, \nabla \bv_h) 
,\quad \forall \bv_h \in \bV_h\:.
\end{equation}
One can apply the Lagrange multiplier method to reformulate the problem: Find $\bu_h \in \bU_{h,0}$ and $p_h \in X_{h,0}$ such that
$$
(\nabla \bu_h, \nabla \bv_h) - \frac{1}{\epsilon} (N'[\hat{\bu}]\bu_h, \bv_h) +
(\Divv \bu_h , q_h) +(\Divv \bv_h , p_h) 
= (\nabla \bw_h, \nabla \bv_h) 
$$
for any $\bv_h \in \bU_{h,0}, q_h \in X_{h,0}$.

The dual operator of $M$ is given by: for any $\bw_h\in \bU_{h,0}$, 
$\bu_h^\ast :=M^\ast \bw_h\in \bV_h$ satisfies
\begin{equation}
\label{eq:def-M-dual}
(\nabla \bu_h^\ast, \nabla \bv_h) - \frac{1}{\epsilon} (\mathcal{N}'[\hat{\bu}]\bv_h, \bu_h^\ast) = (\nabla \bw_h, \nabla \bv_h) 
,\quad \forall \bv_h \in \bV_h\:.
\end{equation}
One can easily verify that, 
$$
T^{-1}=M|_{V_h},\quad 
{T^{-1}}^\ast=M^\ast|_{V_h}\:.
$$
Then the constant $\tau$ can be characterized by 
\begin{equation}
\tau := \max_{\bw_h \in \bV_h} \frac{ \|T^{-1}\bw_h\|_\bV }{\|\bw_h\|_\bV} 
= \max_{\bw_h \in \bU_{h,0}} \frac{ \|M \bw_h\|_\bV }{\|\bw_h\|_\bV}\:.
\end{equation}

\begin{remark}The formualtion of evaluation of $\tau$ over $\bU_{h,0}$ has the merit that the divergence-free condition is processed as constraint condition in the Lagrange multiplier method and the matries can be explicity constructed in solving the eigenvalue problem. The demerit of such kind of formulation is that the matrix in computation has a larger dimension as $\mbox{Dim}(\bU_{h,0}) + \mbox{Dim}(X_{h,0})$, ranther than its essential DOF as $\mbox{Dim}(\bV_h)=\mbox{Dim}(\bU_{h,0})-\mbox{Dim}(X_{h,0})$.
\end{remark}

Let $A$ be the matrix corresponding to the inner product of basis $\{\phi_i\}$ of $\mathbf{U}_{h,0}$, 
i.e., $A_{ij}=(\nabla \phi_i,\nabla \phi_j)$.
Let us use the same notation $M$ for the matrix corresponding to operator $M$ and $x$ the coefficient vector of $\bw_h \in \bU_{h,0}$, with respect to the basis of $\bU_{h,0}$. The the computation of ${\tau}$ is evaluated by solving the following matrix eigenvalue problem.
\begin{equation}
    \label{eq:tau-by-eigenpro-matrix}
M^TAMx = \lambda A x,\quad \tau = \max(\sqrt{\lambda}) \:.
\end{equation}

\section{Application of  Newton--Kantorovich's theorem}
 
This section shows the details in applying the Newton--Kantorovich theorem to solution verification for the Navier--Stokes equation. The content includes the construction of approximate solution $\hat{u}\in V_h$ and the estimation of the quantities $K$, $\delta$, and $G$.

\subsection{Step 1: Approximate solution  $\hat{\bu}\in \bV_h$ }

Using Newton's method, it is easy to find an approximation solution to the Navier--Stokes equation.
However, as a general numerical result, the approximation  $\bu_h$ may not satisfy the divergence-free condition strictly. Hence, a one-step correction is performed to get $\hat{\bu} \in \bV_h$ from the approximation $\bu_h \in  U_{0,h}$. 
To make sure the divergence-free condition strictly holds, the solution of the equation will be presented by an interval vector in the verified computing.
\medskip

Find $\hat{\bu} \in \mathbf{U}_{h,0}$ , $p_h \in X_h$ such that
$$
(\nabla \hat{\bu}, \nabla \bv_h) + (\Divv \hat{\bu}, q_h)  +  (\Divv \bv_h, p_h)=(\nabla \bu_h,\nabla \bv_h) \quad \forall \bv_h \in \mathbf{U}_{h,0}, q_h \in {X}_h
$$
The solution $\hat{\bu}$ of the above equation belongs to $\bV_h$. For a well approximate solution $\bu_h$, it is expected that $\hat{\bu} \approx \bu_h$.

\subsection{Step 2: Estimation of $K$}

\vskip 0.1cm

The estimation of $K$ is obtained by applying Nakao's method. Here, we show the details in evaluating the quantities $\nu_1,\nu_2,\nu_3$, $\tau$, and other involved constants. 
\paragraph{Poincar\'e constant $C_P(\Omega)$} Let $C_P(\Omega)$ be the Poincar\'e constant that satisfies
$$
\|\B{v}\|_{L^2(\Omega)} \le C_P(\Omega) \|\nabla \B{v}\|_{L^2(\Omega)} ~~~~ \forall \B{v}\in \B{V}(\Omega)\:.
$$
As stated in \S \ref{sec:sub-problem-b}, such a constant can be estimated by solving the eigenvalue problem of the Stokes operator in each tetrahedral element.
An upper bound of $C_P(\Omega)$ can also be selected as the one that makes the following inequality hold. 
$$
\|v\|_{L^2(\Omega)} \le \widehat{C}_P \|\nabla v\|_{L^2(\Omega)} ~~~~ \forall v \in H_0^1(\Omega)\:.
$$

\paragraph{\em Estimation of $\nu_1$}

Let $\bw_h:={P}_{h} {{\mathcal A}}^{-1} {\mathcal N}'[\hat{\bu}] u_c$,  Then 
$$
(\epsilon \nabla \bw_h, \nabla \bv_h) = \langle \mathcal N'[\hat{\bu}]\bu_c, \bv_h \rangle \quad \forall \bv_h \in \bV_h\:.
$$
Take $\bv_h:=\bw_h$ and apply the inequality 
$\| \mathcal N'[\hat{\bu}] \mathbf{u}_c \|_{V^\ast} \le \nu_3 \| \mathbf{u}_c\|_{\mathbf{V}}$,
$$
\epsilon \|\nabla \bw_h\|^2 \le \nu_3 \| \mathbf{u}_c \|_{V} \|\nabla \bw_h\| \:.
$$
Hence, we have an upper bound of $\nu_1$ as
\begin{equation}
\nu_1 \le \frac{\nu_3}{\epsilon}  \:.
\end{equation}

\paragraph{\em Estimation of $\nu_2$}
By applying the Schwartz inequality, we have,
$$
 ((\B{u} \cdot \nabla ) \hat{\B{u}}, \B{v}) \le 3 \| \nabla \hat{\B{u}}\|_{\infty}  \cdot \|\B{u}\| \cdot \|\B{v}\|
\le 3 C_P^2(\Omega) \| \nabla \hat{\B{u}}\|_{\infty}   \|\nabla \B{u}\| \cdot \|\nabla \B{v}\|\:,
$$
$$
((\hat{\bu} \cdot \nabla ) {\B{u}}, \B{v}) 
\le \sqrt{3}\| \hat{\bu} \|_{\infty} \cdot \|\nabla \bu \|\cdot \|\bv\| 
\le \sqrt{3} C_P(\Omega) \| \hat{\bu} \|_{\infty}\cdot \|\nabla \bu \|\cdot \|\nabla \bv\| \:.
$$
Thus,
$$
|\langle {\mathcal N}'[\hat{\B{u}}] \B{u}, \B{v}\rangle |
%= ((\B{u} \cdot \nabla ) \hat{\B{u}}, \B{v})  +  ((\hat{\bu} \cdot \nabla ) {\B{u}}, \B{v}) 
\le 
(3 C_P^2(\Omega) \| \nabla \hat{\B{u}}\|_{\infty} + \sqrt{3} C_P(\Omega) \| \hat{\bu} \|_\infty)
\|\bu\|_V\|\bv\|_V\:.
$$
Hence, we have an upper bound of $\nu_2$ as
\begin{equation}
    \nu_2 \le 3 C_P^2(\Omega) \| \nabla \hat{\B{u}}\|_{\infty} + \sqrt{3} C_P(\Omega) \| \hat{\bu} \|_\infty \:.
\end{equation}

\paragraph{\em Estimation of $\nu_3$}
For $\B{u}_c=u-P_h u$, noticing that 
$ \|\B{u}_c\| \le C_h  \|\nabla \B{u}_c\|$, we have
\begin{eqnarray*}
|( {\mathcal N}'[\hat{\B{u}}] \B{u}_c, \B{v})| 
&=& 
| ((\B{u}_c \cdot \nabla ) \hat{\B{u}}, \B{v})  +  ((\hat{\bu} \cdot \nabla ) {\B{u}}_c, \B{v})| \notag\\
&=&
|((\B{u}_c \cdot \nabla ) \hat{\B{u}}, \B{v})  - ((\hat{\bu} \cdot \nabla )\B{v},  {\B{u}}_c) |\\
&\le&
(3  C_P \| \nabla \hat{\B{u}}\|_{\infty}  
+ \sqrt{3}\| \hat{\bu} \|_\infty)
\cdot \|\B{u}_c\|  \cdot \|\nabla \B{v}\| \\
&\le &
(3  C_P \| \nabla \hat{\B{u}}\|_{\infty}  
+ \sqrt{3}\| \hat{\bu} \|_\infty)
\cdot C_h  \|\nabla \B{u}_c\|  \cdot \|\nabla \B{v}\|\:.
\end{eqnarray*}
Therefore, an upper bound of $\nu_3$ is given by
\begin{equation}
\label{eq:nu_3_estimation}    
\nu_3 \le (3  C_P \| \nabla \hat{\B{u}}\|_{\infty}  
+ \sqrt{3}\| \hat{\bu} \|_\infty)
\cdot C_h \:.
\end{equation}

\vskip 0.5cm

\paragraph{\em Estimation of $\tau$}

The evaluation of $\tau$ is one of the core part of the solution verification. Since a small value of $\epsilon$ will cause a large value $C_{h,\mathcal{A}}$ and $\tau$, thus 
the condition that $\kappa=(\tau \nu_1 \nu_2 + \nu_3)C_{h,\mathcal{A}}<1$ may not hold and the evaluation of $K$ will fail. 
Notice that $\kappa = O(h^{2r})$ if $C_h=O(h^r)$, where the convergence rate $0<r<1$ is determined by the solution regularity of the Stokes equation over given domain; for a convex domain, we have $r=1$.
Therefore, to have a successful verification even for small $\epsilon$, one can refine the mesh to have a smaller value of $C_{h,\mathcal{A}}$ such that $\kappa <1 $. The details of evaluation of $\tau$ is explained in \S \ref{sec:sub-problem-c}.

%~ Let the basis function of $V_h$ be $\{\phi_i\}_{i=1,n}$ and $A_0:=\left( (\nabla \phi_i, \nabla \phi_j)\right)_{n\times n}$ 

%~ Suppose $w_h=\sum_{i}\phi_iy_i$,$u_h=\sum_{i}\phi_ix_i$. The matrix corresponding to mapping $T$ is still denoted by $T$. Then, $y=Tx$ and $\tau$ can be evaluatd by solving eigenalue problem of matrix.
%~ $$
%~ \tau^2 = \max_{y} \frac{x^TA_0x}{y^TA_0y} = \max_{y} \frac{y^T K^{-T} A_0 A^{-1}y}{y^TA_0y}
%~ $$

\subsection{Step 3: Estimation of $\delta$}
Let us introduce $\mathbf{g}:=(\hat{\bu}\cdot \nabla ) \hat{\bu} - \mathbf{f}$.  To evalute the residue error of $\mathcal{F}[\hat{\bu}]$, let us seek  $\mathbf{p}_h \in \mathbf{RT}_h$ such that $\mathbf{p}_h \approx \nabla {\hat{\bu}}$ by taking $\mathbf{p}_h$ as the minimizer of 
\begin{equation}
\label{eq:min-problem-for-ph}
\min_{\mathbf{p_h\in \mathbf{RT}_h}} \|\mathbf{p}_h - \nabla \hat{\bu}\|\:,
\end{equation}
where $\mathbf{p}_h$ is subject to the constraint condition:
\begin{equation}
\label{eq:ph_condition_in_delta_estimation}
(\epsilon ~\mbox{div } \mathbf{p}_h - \mathbf{g} , \mathbf{q_h}) =0, ~~\forall \mathbf{q}_h \in X_h \:.
\end{equation}
%
%
%~ The variational problem to obtain $\mathbf{p}_h$ is : 
 %~ Find $\mathbf{p}_h \in \mathbf{RT}_h$, $\mathbf{\rho}_h \in \mathbf{X}_h$, $\phi_h\in \mathbf{U}_h$, $c\in \mathbb{R}$ , s.t.,
%~ \begin{align}
%~ (\mathbf{p}_h, \mathbf{q}_h) + \epsilon ~(\mathbf{\rho}_h, \nabla\cdot \mathbf{q}_h) + \epsilon ~(\nabla \cdot \mathbf{p}_h, \mathbf{\eta}_h) + (\nabla \phi_h, \mathbf{\eta}_h)+ (\mathbf{\rho}_h, \nabla \psi_h) + (\phi_h, d) + (\psi_h,c)\\
%~ = (\nabla \hat{\mathbf{u}}, \mathbf{q}_h) + ((\hat{u}\cdot \nabla ) \hat{u} - \mathbf{f},\eta_h)
%~ ~~~~~~~\forall \mathbf{q}_h \in \mathbf{RT}_h, \eta_h \in \mathbf{X}_h, \psi_h\in \mathbf{U}_h, d \in \mathbb{R} ~~~~~
%~ \end{align}

Let $\widehat{\mathbf{p}}_h$  be an approximate solution to the  minimizaiton problem (\ref{eq:min-problem-for-ph}). 
\begin{eqnarray}
\langle \mathcal{F}[\hat{\bu}] , v\rangle  
\!&\!\!=\!\!&\epsilon ~(\nabla \hat{\bu} - \widehat{\mathbf{p}}_h, \nabla \bv) + \left(  \mathbf{g} - \epsilon  \mbox{ div } \widehat{\mathbf{p}}_h , \bv \right) \notag \\
&\!\!=\!\!&\epsilon ~(\nabla \hat{\bu} - \widehat{\mathbf{p}}_h, \nabla \bv) + ( (I-\pi_h) \mathbf{g}  + (\pi_h  \mathbf{g} - \epsilon ~ \Divv \widehat{\mathbf{p}}_h), \bv) \notag 
\label{eq:residue_error_estimate}
\end{eqnarray}
By applying the error estimation of $\pi_h$ and the Poincar\'{e} constant, we have 
\begin{equation}
\label{eq:delta_estimation}    
\delta = \|\mathcal{F}[\hat{\bu}]\|_{V^\ast} \le \epsilon \|\nabla \hat{\bu}- \widehat{\mathbf{p}}_h \| + C_{0,h} \| (I-\pi_{h})  \mathbf{g}
\|
+C_{p} \|\pi_h \mathbf{g}  - \epsilon ~ \Divv \widehat{\mathbf{p}}_h \|.
\end{equation}
Notice that constraint condition (\ref{eq:ph_condition_in_delta_estimation}) implies $\pi_h \mathbf{g}=\epsilon \:\Divv \mathbf{p}_h$. 
Since the minimization problem (\ref{eq:min-problem-for-ph}) can be easily solved by classical numerical schemes, the last term in (\ref{eq:delta_estimation}) will be pretty small compared to other terms.

\subsection{Step 4: Estimation of $G$}
\label{subsec:est-G}
Notice that for general $\bv, \bu,\tilde{\bu} \in \bV$,
$$
|(( \bv \cdot \nabla ) \bu, \tilde{\bu})| \le \sqrt{3} \| \bv\|_{L^4} \|\nabla \bu\| \|\tilde{\bu}\|_{0,4} \le \sqrt{3}C_{4,P}^2\|\nabla \bv\| \cdot \|\nabla {\bu}\| \cdot  \|\nabla \tilde{\bu}\| \:,
$$
where $C_{4,P}$ is defined by
$$
C_{4,P} := \max_{\bv\in \bV } \frac{ \|\bv\|_{L^4}}{\|\nabla \bv\|}\:.
$$
By following Plum's result in Lemma 2 of \cite{Plum2008}, we have an upper bound of $C_{4,P}$ as $$
C_{4,P} \le \left(\frac{8}{9}\right)^{1/4}\cdot \left(\frac{1}{\lambda_1}\right)^{1/8}\:,
$$
where $\lambda_1$ is the first eigenvalue of the Stokes operator with the  homogeneous Dirichlet boundary condition.
Notice that the analysis in \cite{Plum2008} only considers the case of $H^1_0(\Omega)$, while its results can be easily extended to the divergence-free space $\bV$.
Thus, 
$$
|\langle (\mathcal{F}'[\bv]  - \mathcal{F}'[\bw]) \bu,\tilde{\bu} \rangle |= |\langle \mathcal{N}'[\bv-\bw]\bu,\tilde{\bu} \rangle |
\le 2 \sqrt{3} C_{4,P}^2 \|\nabla (\bv-\bw)\| \cdot \|\nabla {\bu}\| \cdot  \|\nabla \tilde{\bu}\|\:,
$$
which implies $G$ can be taken as
\begin{equation}
    \label{eq:est-G}
G =\frac{4\sqrt{6}}{3}\left(\frac{1}{{\lambda_1}}\right)^{1/4}\:.
\end{equation}

\section{Solution verification example}  

In this section, a solution verification case over a 3D non-convex domain is reported. The computing is performed under the HPC server provided by Oishi Lab at Waseda University, and also the Ganjin online computing environment \cite{ganjin}.
The solution verification example reported here can be directly tested online at the Ganjin site.

\subsection{Mesh generation and interpolation error constants}
\label{subsec:mesh-generation}

Since the Scott-Vogelius FEM for Stokes equation over a general mesh will lead to the singularity of discretized matrices, we apply Zhang's method \cite{Zhang} in the mesh generation process to avoid the singularity.
Note that for 2D case, a mesh without a degenerate point is required for a stable computation \cite{scott1985norm}. 

 In our computation experiments, the domain will be firstly subdivided into small uniform cubes. Then each cube is divided into $5$ tetrahedra to obtain a standard mesh $\mathcal{T}^h$. To construct the Scott-Vogelius space, the mesh 
 $\mathcal{T}^h_{SV}$ is refined by following Zhang's method to have $\mathcal{T}^h_{SV}$, that is, each tetrahedron of $\mathcal{T}^h$ is further partitioned into $4$ sub-tetrahedra with respect to the barycentric of the tetrahedron. The spaces $\bRT_h$ over $\mathcal{T}^h$ and $\bV_h$ over $\mathcal{T}^h_{SV}$ have independent meshes in the construction of the hypercircle for the {\em a priori } error estimation. 

 Let $h$ be the edge length of small cubes in the subdivision. From the results in \cite{JCAM-LIU-2020}, we have an upper bound for the Poincar\'e constant over the tetrahedra of $\mathcal{T}^h$ resulted from our mesh generation method. 
\begin{equation}
  \label{eq:poincare-constant-bound}
  C_{0,h} \le \widehat{C}_{0,h} \le 0.284 h ~~ (h: \mbox{the largest edge length of sub-cubes}) .
\end{equation}

\medskip

In the following computation example, the degrees of FEM function spaces are selected such that $k-1=d=m=2$. 
\subsection{VFEM package for rigorous computation}
Implementing the solution verification algorithm proposed in this paper is not easy work. The existing FEM libraries, such as FEniCS, can give efficient computation for various partial differential equations. 
However, such libraries are not suitable for verified computing since the process of matrix assembling introduces errors such as the rounding error of floating-point numbers and the approximation error of numerical quadrature.  
The first author developed the Verified Finite Element Method (VFEM) MATLAB toolbox to assemble the matrices with rigorous computation. 
To bound the rounding error, the interval arithmetic is utilized. 
To have rigorous integral of polynomial basis of FEM spaces, the base functions are represented by Bernstein polynomials with the volume coordinate $(u,v,w,t)$ over each tetrahedra element $K$. 
$$
B^N_{ijkl}(u,v,w,t) := \frac{N!}{i!j!k!l!}  u^iv^jw^kt^l, \quad % \quad \binom{N}{i,j,k,l}:=, 
(i+j+k+l=N, u+v+w+t=1)\:.
$$
The following formula is helpful in calculating the integral of polynomials. 
$$
\int_K  u^iv^jw^kt^l \mbox{d} K  = 6|K| \frac{i!j!k!l!}{(i+j+k+l+3)!} \mbox{ for } i,j,k,l \ge 0\:.
$$
Different from the standard method, which considers the transformation between the objective element $K$ and a reference $\hat{K}$, VFEM calculates the explicit value of integrals of polynomial systems on each element directly.

The VFEM package has an implementation of the continuous Lagrange FEM space, the Raviart--Thomas FEM space, the Courzeix-Riviart FEM space. It provides the general function operators such as the gradient operator and the divergence operator. 
The basic operations for Berstein polynomial, such as the domain subdivision under the de Casteljau scheme, the degree raising operation, are also available. The selection of Bernstein polynomial also enjoys the efficiency in evaluating the $L^\infty$ norm of FEM solutions by using its convex hull property. The package is provided as MATLAB codes and has an interface to switch between classical approximate calculation and the verified computing with the INTLAB toolbox \cite{INTLAB}. In the near future, the package with C++ language support will also be developed.

The matrices in the FEM computation are assembled by VFEM. To have rigorous computation results of linear systems or matrix eigenvalue problems, one has to turn to the functions provided by the INTLAB toolbox or self-developed algorithms.

\subsection{Solution verification example over a 3D non-convex domain}
Let us illustrate a solution verification example over a 3D domain. The domain $\Omega$ is selected as a cuboid with a hole inside:
$$
\Omega = ((0,1)^2 \setminus [0.25,0.5]^2)\times (0,{0.5})\:.
$$
The detailed setting for the numerical example is as follows:
$$
\mathbf{f}=(15(1-y)^2,0, 10z^2), ~~ \epsilon = 0.25\:.
$$
A rigorous estimatin of the norm of $\bff$ tells that $\|\bff\|_{L^2} \in [4.6826, 4.6827]$. 

\medskip

The mesh and FEM spaces are created in the following way:
\begin{itemize}
    \item Along x-, y-direction, the longest edge of the domain boundary is uniformly divided to $N=4$ parts, and along the z-direction, the edge is divided into $N/2=2$ parts. Thus total $30\times3$ small cubes with the size of $0.25$ are obtained. Mesh $\mathcal{T}^h$ is obtained by dividing each block into $5$ tetrahedra along the diagonal lines. Finally, by following Zhang's method \cite{Zhang}, $\mathcal{T}^h_{SV}$ is created by further dividing each tetrahedron of $\mathcal{T}^h$ into $4$ sub-tetrahedra with respect to its centroid. The mesh $\mathcal{T}^h_{SV}$ contains total $600$ tetrahedron elements.
    \item The degree of FEM function spaces is selected as: $d=2, m=2, k=3$. That is, the discontinuous space $X_h$ and 
    the Raviart--Tomas space $RT_h$ over $\mathcal{T}^h$ are constructed by piecewise polynomial with degree as $d=2$ and $m+1=3$, respectively. 
    The Scott--Vogelius space $\bV_h$ over $\mathcal{T}^h_{SV}$ 
    has polynomial degree as $k=3$. 
    Denote the dimension of the matrix for constructing $\bV_h$ by $N_{Conf}$ and the dimension for the matrix in mixed formulation by $N_{Mix}$.
    The dimension of spaces are list in Table \ref{tab:fem-sapce-dim}. Note that independent mesh selection for Scott--Vogelius space and Raviart--Thomas space in constructing the hypercircle equation makes it possible for a balance of matrix size in the computation in the sense that 
    $N_{Conf}\approx N_{Mix}$.
    
    \begin{table}[h]
        \centering
        \begin{tabular}{|c|c|c|c|c|c|c|c|c|}
        \hline
        \rule[-3mm]{0cm}{8mm}{Mesh} &  \multicolumn{4}{|c|}{$\mathcal{T}^h_{SC}$} & \multicolumn{4}{|c|}{$\mathcal{T}^h$}\\
        \hline
        \rule[-3mm]{0cm}{8mm} Space & $\bU_{h,0}$ & $X_{h,0}$ & $\bV_h$ & $N_{Conf}$ & $\bRT_h$ & $\bX_{h}$ & $U_{h}$ & $N_{Mix}$ \\
        \hline
        \rule[-3mm]{0cm}{8mm}Dim & 7965 & 5999 & 1966 & 13964 & 12060 & 4500 & 1037 & 17596 \\  
        \hline
        \end{tabular}
        \caption{Dimension of FEM spaces and matrices}
        \label{tab:fem-sapce-dim}
    \end{table}
\end{itemize}

The approximate solution $\hat{\bu}$ to Navier--Stokes equation is obtained by using the solver from FEniCS and then loaded into VFEM as a piecewise polynomial. To have the divergence-free condition strictly satisfied, one projection of $\hat{\bu}$ into divergence-free Scott--Vogelius FEM space is performed.
 
The streamlines of the approximate solution $\hat{\bu}$ for the velocity field is displayed in Fig. \ref{fig:computation_result}. Below is the property of the approximation solution.
$$
\|\hat{\bu}\|_{L^2} = 0.0356,~~\|\nabla \hat{\bu}\|_{L^2} = 0.4543,~~
\|\hat{\bu}\|_{L^\infty} = 0.1466,~~
\|\nabla \hat{\bu}\|_{L^\infty} = 11.8763 \:.
$$
The mesh for the computation is very raw, but its size is well selected so that the corresponding numerical computation can be easily solved in an entry-level workstation with 64 GB memory. 
A further refined mesh will require dramatically increased resources in solution verification.

\begin{figure}[ht]
\begin{center}
\includegraphics[scale=0.65]{./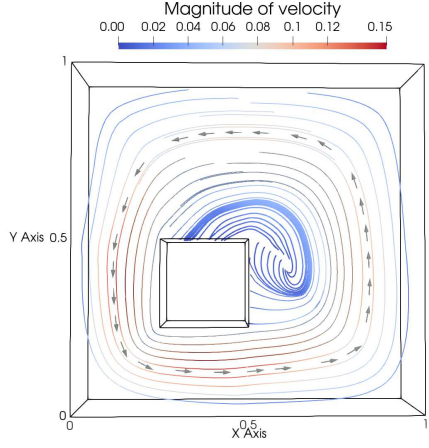}
\caption{\label{fig:computation_result} Streamline of the approximate solution around the z=0.25 plane}
\end{center}
\end{figure}

\paragraph{Values of various quantities used in the solution verification}

\begin{itemize}

\item The minimal eigenvalue $\lambda_1$ of the Stokes operator with homogeneous boundary condition and 
the Poincar\'e constant $C_{p}$ are obtained by applying the Crouzeix--Raviart FEM space to the  twice refined mesh of $\mathcal{T}^h$: 
% $$
% \lambda_1>114,~~ C_P=1/\sqrt{\lambda_1} = 0.0932\:.
% $$
$$
\lambda_1>139.60,~~ C_P=1/\sqrt{\lambda_1} \le 0.0846\:.
$$
An upper eigenvalue bound can be obtained as $\lambda_1 < 159.1 $ by utilizing the Scott-Vogelius FEM over $\mathcal{T}^h_{SV}$.
Note that by considering the eigenvalue of Laplacian over the cuboid without a hole, one can also obtain 
a analytical and very raw eigenvalue bound as
 $\lambda_1 \ge 6 \pi^2 \approx 59.2$ 
 with the Laplacian's eigenfunction
 $u=\sin\pi x \sin\pi y\sin 2\pi z $.
 
\item Error constants in {\em a priori} error estimation: 
    $$
    C_{h} =\sqrt{\kappa_h^2+C_{0,h}^2} =\sqrt{0.0583^2 + 0.0625^2}\le 0.08548;~
    C_{h,\mathcal{A}} =\frac{C_h}{\epsilon} \le 0.3568 \:.
    $$

\item Estimate of $K$ (norm of inverse operator $\mathcal{F}'[\hat{\bu}]^{-1}$): 
$$
\nu_1=1.1663, ~\nu_2=0.2768, \nu_3=0.2916, ~\tau =1.0016,~\kappa = 0.2793, ~K=2.2090 \:. 
$$

\item Estimation for the residue error of $\mathcal{F}[\hat{\bu}]$ and the local continuity:
    $$\delta=0.05743,~~G\le 0.9502 \:.$$

\item 
The solution existence condition by Newton--Kantorovich's theorem:
  $$
  \alpha \cdot \omega =   K\delta \cdot  KG = 0.2663 < 1/2 \:.
  $$
\end{itemize}

\paragraph{Conclusion}

From the Newton--Kantorovich theorem, we can declare the stationary solution existence and uniqueness of the Navier--Stokes equation inside the ball $B(\hat{\bu},\rho)$, where
$$
\rho = \frac{ 1- \sqrt{1-2\alpha \omega} }{\omega}= 0.1507\:.
$$

\begin{remark} Let us apply Girault-Raviart's theorem in section 1 to the problem considered here.
Since ${N}$ is difficult to evaluate, we apply the technique for estimating $G$ to have a theoretical upper bound as: 
$$
{N} \le G/2 \approx 0.4751 \:.
$$
The equality $(\bff,\bv)\le C_P \|\bff\| \cdot \|\bv\|_\bV$ leads to an estimation of $\|\bff\|_{V^\ast}$:
$$\|\bff\|_{V^\ast}\le C_P \|\bff\|\le 0.3961\:.
$$
One can apply the Scott--Vogelius FEM to find an approximation to $\Delta_s^{-1}\bff$ and then apply the 
{\em a priori} error estimation to estimate $\|\bff\|_{V^\ast}$ directly.  
By further refining current mesh $\mathcal{T}^h_{SC}$ for 3 times and applying the Scott--Vogelius FEM, we have 
$$
\|\Delta_s^{-1}\bff\|_{\bV} \le \|P_h(\Delta_s^{-1}\bff) \|_{\bV} +
\|(I-P_h)(\Delta_s^{-1}\bff))\|_\bV \le 0.131 + 0.051 \le 0.182\:.
$$
With $\epsilon=0.25$, we have 
$$
{N} \cdot \|f\|_{V^\ast}/\epsilon^2 \le
0.4751\cdot 0.182/0.25^2\approx 1.383 ~~( >1 )\:.
$$
Therefore, the unique solution existence cannot be esaily confirmed by only using Girault-Raviart's theorem.
\end{remark}

\section*{Conclusion}

In this paper, we propose a method to provide solution verification for the stationary solution of the Navier--Stokes equation. The method is based on the finite element approximation for objective function spaces, and the algorithm can easily deal with 3D domains of general shapes. Since the simulation in 3D domains usually causes large-scaled matrices and the rigorous computation requires higher computing resources compared to classical approximate computation, for the moment, only the flow with small Reynolds numbers can be verified in a reasonable time. In the following research, we will continue improving the efficiency for both the theoretical error analysis and the code implementation in rigorous computations. It is aimed to verify the solution to Navier--Stokes's equation with large Reynolds numbers to investigate complex phenomena of 3D flows.

\vskip 0.5cm

\bibliographystyle{plain}

  %%%%% References
  \bibliography{mylib}

  %%%%% References
  %~ \begin{thebibliography}{9}
  
  %~ \bibitem{Girault_Raviart_1986} V. Girault and  P.A. Raviart, Finite Element Methods for Navier--Stokes Equations: Theory and Algorithms, 1986, Springer-Verlag.
  %~ \bibitem{SIAM-LIU}
  %~ Xuefeng Liu and Shin'ichi Oishi, Verified eigenvalue evaluation for Laplacian over polygonal domain of arbitrary shape, SIAM J. Numer. Anal., 51(3), pp.1634-1654, 2013.
  %~ \bibitem{Zhang}
  %~ Shangyou Zhang, A new family of stable mixed finite elements for the 3D Stokes equations, Mathematics of computation 74(250) pp.543-554, 2005.
  %~ \end{thebibliography}
  
%  %%%% Appendix
%  \appendix
%  \section{Appendix}
\end{document}